\documentclass[10pt]{amsart}
\usepackage{amsmath,amsthm, epsfig, amscd}
\usepackage{psfrag}
\usepackage[psamsfonts]{amssymb}
\usepackage[all]{xy}
\usepackage{mathrsfs}

\numberwithin{equation}{section}

\newtheorem{Thm}{Theorem}[section]
\newtheorem{Prop}[Thm]{Proposition}
\newtheorem{Lem}[Thm]{Lemma}
\newtheorem{Cor}[Thm]{Corollary}
\newtheorem{Conj}[Thm]{Conjecture}

\theoremstyle{definition}

\newtheorem{Def}[Thm]{Definition}

\newtheorem{Exa}[Thm]{Example}

\newcommand{\mysection}[2]{%
\vspace{2mm}\section{\bf #1}\label{#2}
}

\newcommand{\fig}[1]
        {\raisebox{-0.5\height}
                 {\includegraphics{#1}}
        }

\setlength{\headheight}{30pt}
\setlength{\topmargin}{0cm} 
\setlength{\oddsidemargin}{1.9cm} 
\setlength{\evensidemargin}{1.9cm} 

\def\Z{{\mathbb Z}}
\def\R{{\mathbb R}}
\def\Q{{\mathbb Q}}

\def\calA{\mathscr{A}}

\def\calE{\mathscr{E}}

\def\calG{\mathscr{G}}

\def\calK{\mathscr{K}}

\def\calM{\mathscr{M}}

\def\calS{\mathscr{S}}

\def\deg{\mathrm{deg}}

\def\Aut{\mathrm{Aut}\,}

\newcommand{\mapright}[1]{
	\smash{\mathop{
		\hbox to 1cm{\rightarrowfill}}\limits^{#1}}}

\newcommand{\mapleft}[1]{
	\smash{\mathop{
		\hbox to 1cm{\leftarrowfill}}\limits^{#1}}}


\def\Tr{\mathrm{Tr}}

\def\Conf{\mathrm{Conf}}

\def\bcalM{\overline{\calM}}

\def\ve{\varepsilon}
\def\bConf{\overline{\Conf}}

\def\acalM{\calM^{\mathrm{Z}}}
\def\bacalM{\bcalM^{\mathrm{Z}}}

\def\wM{\widetilde{M}}
\def\wf{\widetilde{f}}

\def\twedge{\textstyle\bigwedge}

\def\irr{\mathrm{irr}}

\def\null{\mathrm{NH}}
\def\wxi{\widetilde{\xi}}

\begin{document}

\title[Finite type invariants of nullhomologous knots]{Finite type invariants of nullhomologous knots in 3-manifolds fibered over $S^1$ by counting graphs}
\author{Tadayuki Watanabe}
\address{Department of Mathematics, Shimane University,
1060 Nishikawatsu-cho, Matsue-shi, Shimane 690-8504, Japan}
\email{tadayuki@riko.shimane-u.ac.jp}
\date{\today}
\subjclass[2000]{57M27, 57R57, 58D29, 58E05}

{\noindent\footnotesize {\rm Preprint} (2015)}\par\vspace{17mm}
\maketitle
\vspace{-6mm}
\setcounter{tocdepth}{2}
\begin{abstract}
We study finite type invariants of nullhomologous knots in a closed 3-manifold $M$ defined in terms of certain descending filtration $\{\calK_n(M)\}_{n\geq 0}$ of the vector space $\calK(M)$ spanned by isotopy classes of nullhomologous knots in $M$. The filtration $\{\calK_n(M)\}_{n \geq 0}$ is defined by surgeries on special kinds of claspers in $M$ having one special leaf. More precisely, when $M$ is fibered over $S^1$ and $H_1(M)=\Z$, we study how far the natural surgery map from the space of $\Q[t^{\pm 1}]$-colored Jacobi diagrams on $S^1$ of degree $n$ to the graded quotient $\calK_n(M)/\calK_{n+1}(M)$ can be injective for $n\leq 2$. To do this, we construct a finite type invariant of nullhomologous knots in $M$ up to degree 2 that is an analogue of the invariant given in our previous paper arXiv:1503.08735, which is based on Lescop's construction of $\Z$-equivariant perturbative invariant of 3-manifolds.
\end{abstract}
\par\vspace{3mm}

\def\baselinestretch{1.07}\small\normalsize


\mysection{Introduction}{s:***}

There is a natural descending filtration $\calK_0(S^3)\supset\calK_1(S^3)\supset \calK_2(S^3)\supset\cdots$ on the vector space (or $\Z$-module) $\calK(S^3)$ spanned by isotopy classes of knots in $S^3$, called the Vassiliev filtration (after Vassiliev's \cite{Va1}), whose $n$-th term is spanned by alternating sums of possible resolutions of singular knots with $n$ double points. In terms of the Vassiliev filtration, finite type (or Vassiliev) invariant of knots of degree $n$ is defined as linear maps from $\calK(S^3)/\calK_{n+1}(S^3)$ (\cite{BL, BN2}). It is known that the natural ``geometric realization'' map gives an isomorphism from the vector space of certain trivalent graphs called the Jacobi diagrams (\cite{BN1, BN2}, see also \cite{CDM}) to the graded quotients $\calK_n(S^3)/\calK_{n+1}(S^3)$. This very striking result has been proved by Kontsevich in \cite{Ko2} with the help of his diagram-valued universal finite type invariant of knots in $S^3$, so called the Kontsevich integral of knots. There is another construction of finite type invariants of knots in $S^3$ coming from Chern--Simons perturbation theory, which are given by integrations on configuration spaces (\cite{BN1, GMM, Koh, Ko1, BT}). It has been proved by Kontsevich \cite{Ko1} (degree 2) and Altschuler--Freidel \cite{AF} (all degrees) that the configuration space integral invariant give another universal finite type invariant of knots in $S^3$. 

In this paper, we study finite type invariants of nullhomologous knots in an oriented closed 3-manifold $M$. As an analogoue of finite type invariants of knots in $S^3$ defined by null-claspers (Garoufalidis--Rozansky \cite{GR}), we introduce a descending filtration $\calK_0(M)\supset \calK_1(M)\supset \calK_2(M)\supset \cdots $ of the space of isotopy classes of nullhomologous knots in $M$ by using surgeries on certain claspers with nullhomologous leaves. In the case $H_1(M)=\Z$ and $M$ is fibered over $S^1$, we show that the natural surgery map from the space of $\Q[t^{\pm 1}]$-colored Jacobi diagrams on $S^1$ of degree $n$ to the graded quotient $\calK_n(M)/\calK_{n+1}(M)$ is injective for $n=1$. For $n=2$, we also have similar but weaker statement. The main idea is to construct a diagram-valued perturbative invariants of nullhomologous knots in a fibered 3-manifold $M$ by a method similar to \cite{Les2, Les3, Wa3}. 

We remark that finite type invariants of knots in general 3-manifolds have been developed in \cite{Ka, KL, Va2} by considering singular knots with double points. Although the definition of finite type invariants of \cite{Ka, KL, Va2} is different from that studied in this paper, it can be considered as a ``noncommutative'' refinement of ours. We also remark that in \cite{Ha}, Habiro studied finite type invariants of links in general 3-manifolds defined by using surgeries on graph claspers, which is different from that studied in this paper too. Although Habiro's definition of finite type invariant is natural from the point of view of clasper theory, we modify Habiro's definition for our purpose. Other relevant works can be found in \cite{Sch1, Sch2, Lie}.

We define the perturbative invariant as the trace of the generating function of counts of certain graphs in $M$, which we call {\it Z-graphs}. The definition of our invariant is based on Lescop's works on equivariant perturbative invariants of knots and 3-manifolds \cite{Les2, Les3, Les4} and on the explicit propagator of ``Z-paths'' in $M$ given in \cite{Wa2} using parametrized Morse theory. Our construction can be considered as an analogue of that of the configuration space integral invariant. Our knot invariant differs from Lescop's one in the presence of the Wilson loop in Jacobi diagrams, which is a simple distinguished cycle. 

Throughout this paper, manifolds and maps between them are smooth unless otherwise indicated. We use the outward-normal-first convention to orient boundaries of manifolds. Homology groups are assumed to be with integer coefficients unless otherwise specified. A knot will denote both an embedding $S^1\to M$ and its image in $M$. We consider a graph as a topological space by identifying it with its geometric realization. 

\mysection{Finite type invariants of nullhomologous knots in $M$}{s:***}

We study finite type invariants of nullhomologous knots in a closed 3-manifold defined by using clasper surgeries. The theory of clasper surgery has been developed independently by Goussarov and Habiro in \cite{Gu, Ha}. We review some fundamental properties of claspers and prescribe the type of finite type invariants studied in the present paper. The main result of the present paper is stated in terms of clasper surgeries. 

\subsection{Null-claspers, filtration $\calK_n(M)$ and finite type invariant}

We shall recall definition of clasper surgery from \cite{Ha}. Let $M$ be a closed connected oriented Riemannian 3-manifold. For simplicity, we assume that $H_1(M)$ is free abelian. A {\it tree clasper for a knot $K$ in $M$} is a compact connected surface immersed in $M$ consisting of {\it bands, nodes, leaves} and {\it disk-leaves} (as in Figure~\ref{fig:tree-clasper}). A band is an embedded 2-dimensional 1-handle, a node is an embedded 2-dimensional 0-handle on which three bands are attached. A leaf (resp. disk-leaf) is an embedded annulus (resp. an embedded 0-handle) on which a band is attached. The union of bands, nodes and leaves is embedded in $M\setminus K$, whereas a disk-leaf may intersect $K$ or leaves or bands transversally in its interior. 

A tree clasper without nodes is called an {\it $I$-clasper}. On an $I$-clasper, surgery is defined as follows. In this paper, we only consider $I$-clasper with at least one disk-leaves. Then surgery on such an $I$-clasper $C$ is defined as the replacement as in Figure~\ref{fig:surgery-I}. One may also consider surgery on a tree clasper by replacing nodes and disk-leaves with collections of leaves as in Figure~\ref{fig:tree-clasper}, which can be realized by iterated applications of the moves in Figure~\ref{fig:surgery-I}. According to \cite{Ha}, the result of the surgery is determined uniquely up to isotopy.
\begin{figure}
\fig{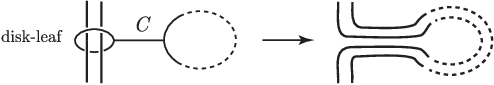}
\caption{$I$-clasper and surgery on it (we use the blackboard-framing convention to represent ribbon structure of $I$-clasper)}\label{fig:surgery-I}
\end{figure}

We say that a tree clasper for a nullhomologous knot $K$ in $M$ is {\it $M$-null} if its leaves consist of disk-leaves except at most one leaf that is nullhomologous in $M$. See Figure~\ref{fig:tree-clasper} for an example. A {\it strict tree clasper} is a tree clasper with only disk-leaves that intersect only with the knot $K$. The {\it degree} of a tree clasper $T$ is the number of nodes in $T$ plus 1. Let $K^T$ denote the knot in $M$ obtained from $K$ by surgery along the set of $I$-claspers associated to $T$. 
\begin{figure}
\fig{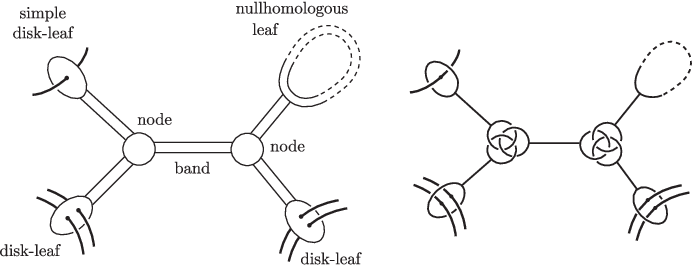}
\caption{$M$-null tree clasper and the associated set of $I$-claspers}\label{fig:tree-clasper}
\end{figure}

\begin{Lem}\label{lem:unknotting}
If $T$ is an $M$-null tree clasper for a nullhomologous knot $K$ in $M$, then $K^T$ is again a nullhomologous knot in $M$. Two nullhomologous knots in $M$ are related by surgeries on finitely many strict $I$-claspers and at most one $M$-null $I$-clasper.
\end{Lem}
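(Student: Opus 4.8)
The plan is to prove the two assertions separately: the first by a direct homological computation inside a regular neighborhood, and the second by combining a homotopy-theoretic normal form with basic clasper calculus. For the first assertion I would work in a regular neighborhood $N(T)$ of $T$ together with the arcs of $K$ meeting the disk-leaves. Since surgery along the $I$-claspers associated to $T$ is supported in $N(T)$, the knots $K$ and $K^T$ agree outside $N(T)$, and the arcs $K\cap N(T)$ and $K^T\cap N(T)$ have the same endpoints on $\partial N(T)$. Hence $K-K^T$ is an honest $1$-cycle supported in $N(T)$, so $[K]-[K^T]$ lies in the image of $H_1(N(T))\to H_1(M)$. Now $N(T)$ deformation retracts onto $T$, and $T$ is a tree carrying disk-leaves (contractible) and at most one ordinary leaf; therefore $H_1(N(T))\cong H_1(T)$ is generated by the cores of the ordinary leaves. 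For an $M$-null clasper there is at most one such leaf and it is nullhomologous in $M$, so this image is zero. Thus $[K^T]=[K]=0$ and $K^T$ is nullhomologous (that $K^T$ is again an embedded circle is built into the definition of clasper surgery).

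For the second assertion, let $K_0,K_1$ be nullhomologous knots, and reduce to two moves. First I would use a single $M$-null $I$-clasper to make $K_0$ freely homotopic to $K_1$. Choosing basepoints and whiskers, represent $K_0,K_1$ by $a,b\in\pi_1(M)$; since both are nullhomologous, $a,b\in[\pi_1(M),\pi_1(M)]$, so $\gamma\eqdef a^{-1}b$ is nullhomologous. Represent $\gamma$ by an embedded loop with an annular neighborhood and use it as the single ordinary leaf of an $M$-null $I$-clasper whose disk-leaf grabs one strand of $K_0$; surgery on it band-sums $K_0$ with $\gamma$, changing the based class $a$ to $a\gamma=b$, so that the resulting knot $K_0'$ is freely homotopic to $K_1$. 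Second, I would connect the freely homotopic knots $K_0'$ and $K_1$: a generic homotopy $\{K_t\}$ between them is an isotopy except at finitely many times where $K_t$ acquires a single transverse double point, and passing such a time is a crossing change, which is exactly surgery on a strict $I$-clasper (a degree-$1$ clasper with two disk-leaves meeting the knot). Combining, $K_0$ and $K_1$ are related by finitely many strict $I$-clasper surgeries and at most one $M$-null $I$-clasper surgery (none if $K_0$ and $K_1$ are already freely homotopic). By the first part each such surgery preserves nullhomologousness, so the whole sequence stays within nullhomologous knots, as it must.

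I expect the homological computation of the first assertion to be routine. The main obstacle is to pin down the precise effect of the $M$-null $I$-clasper surgery on $\pi_1(M)$ --- that is, to verify from Habiro's surgery description that grabbing a strand with a disk-leaf and band-summing along an ordinary leaf $\gamma$ really multiplies the based homotopy class by $\gamma$ (up to a conjugation that is harmless, since only the free homotopy class matters) --- together with the transversality statement that free homotopy of knots in a $3$-manifold is generated by isotopy and crossing changes. Both are standard, but they are where the geometric care is needed.
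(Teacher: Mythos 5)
Your proposal is correct, and its second half is essentially the paper's own argument: the paper also uses exactly one $M$-null $I$-clasper whose leaf realizes the ``difference'' of the two knots (there taken concretely as a parallel of $(-K_0)\#_{\gamma_{01}}K_1$, which is nullhomologous since $[K_1]-[K_0]=0$), so that surgery makes $K_0$ homotopic to $K_1$, followed by crossing changes, i.e.\ strict $I$-clasper surgeries; your $\pi_1$-formulation with $\gamma=a^{-1}b$ is the same construction phrased group-theoretically, and the conjugation/basing caveat you flag is the only point of care in either version. Where you genuinely diverge is the first assertion: the paper invokes Habiro's move 9 to replace surgery on an $M$-null tree clasper by a sequence of surgeries on $M$-null $I$-claspers, and then observes that each such elementary surgery visibly preserves the homology class; you instead give a direct computation, localizing the change of $[K]$ in the image of $H_1(N(T))\to H_1(M)$ and noting that $H_1(N(T))$ is generated by the cores of the (at most one, nullhomologous) annular leaves. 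Your route is more self-contained, handling the tree clasper in one step without clasper calculus; the paper's route has the advantage that the move-9 reduction is a tool it reuses repeatedly (e.g.\ in the proof of Proposition~\ref{prop:psi-surjective}), so establishing the lemma through it costs nothing extra in context. Both arguments are sound.
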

\begin{proof}
By Habiro's move 9 in \cite[Proposition~2.7]{Ha}, surgery on an $M$-null tree clasper can be replaced with a sequence of surgeries on $M$-null $I$-claspers, namely, $I$-claspers with only disk-leaves or that with one disk-leaf and one nullhomologous leaf in $M$. 
\[ \fig{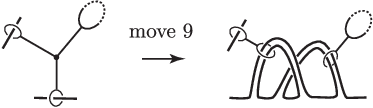} \]
It is obvious that surgeries on such $I$-claspers do not change the homology class of a knot in $M$.

For the second assertion, let $K_0,K_1$ be two nullhomologous knots in $M$. We may assume without loss of generality that $K_0$ and $K_1$ are mutually disjoint. Take base points $q_0,q_1$ on $K_0,K_1$ respectively and a path $\gamma_{01}$ in $M$ going from $q_0$ to $q_1$ such that $\mathrm{Im}\,\gamma_{01}\cap(K_0\cup K_1)=\{q_0,q_1\}$. Let $K_{01}$ be a knot in $M$ given by a connected sum $(-K_0)\#_{\gamma_{01}}K_1$ taken along $\gamma_{01}$. Let $C$ be an $M$-null $I$-clasper whose disk-leaf $\delta$ intersects $K_0$ at $q_0$ and the other leaf is a parallel of $K_{01}$ disjoint from $K_0\cup K_1\cup \delta$. Then $K_0^C$ is homotopic to $K_1$. Hence $K_1$ is obtained from $K_0$ by surgery on $C$ and several crossing changes.  
\end{proof}

Lemma~\ref{lem:unknotting} motivates the definition of finite type invariants given below. Let $\calK(M)$ be the vector space over $\Q$ spanned by isotopy classes of all nullhomologous knots in $M$. Let $\calK_{n,k}(M)$ ($1\leq k\leq n$) be the subspace of $\calK(M)$ spanned by 
\[ [K;G]=\sum_{I\subset \{1,2,\ldots,k\}}(-1)^{k-|I|} K^{G_I},\quad\mbox{where} \]
\begin{itemize}
\item $K$ is a nullhomologous knot in $M$,
\item $G=\{G_1,G_2,\ldots,G_k\}$ is a disjoint collection of tree claspers with $\sum_{i=1}^k \deg\,G_i=n$ that consists of strict tree claspers and at most one $M$-null tree clasper. 
\item $G_I=\bigcup_{i\in I}G_i$.
\end{itemize}
The alternating sum $[K;G]$ as above is called an {\it $M$-null forest scheme of degree $n$, size $k$}. When all $G_i$ are strict, then $[M;G]$ is called a {\it strict forest scheme}. We put
\[  \calK_0(M)=\calK(M),\qquad \calK_n(M)=\sum_{k=1}^n \calK_{n,k}(M)\quad\mbox{(for $n\geq 1$)}.\] 
The following lemma follows as a corollary of the results in \cite[page 48]{Ha}.
\begin{Lem}\label{lem:prop-scheme}
\begin{enumerate}
\item If $1\leq k\leq k'\leq n$, then $\calK_{n,k}(M)\subset \calK_{n,k'}(M)$. In particular, $\calK_n(M)=\calK_{n,n}(M)$.
\item $[K;G_1,G_2,\ldots,G_k]=[K^{G_1};G_2,\ldots,G_k]-[K;G_2,\ldots,G_k]$.
\item $[K;S\cup T,G_2,\ldots,G_k]=[K;S,G_2,\ldots,G_k]+[K^S;T,G_2,\ldots,G_k]$, where $S$ and $T$ are tree claspers that are disjoint.
\end{enumerate}
\end{Lem}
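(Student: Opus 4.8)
The plan is to dispatch parts (2) and (3) first, as purely formal consequences of the inclusion--exclusion definition of $[K;G]$ together with the single geometric fact that surgeries along disjoint claspers commute, i.e. $K^{G_1\cup G_J}=(K^{G_1})^{G_J}$ whenever $G_1$ is disjoint from $G_J=\bigcup_{i\in J}G_i$; the genuine work lies in (1). For (2) I would split the defining sum $\sum_{I\subset\{1,\ldots,k\}}(-1)^{k-|I|}K^{G_I}$ according to whether $1\in I$. The terms with $1\notin I$ range over $I\subset\{2,\ldots,k\}$ and, after extracting $(-1)^{k-|I|}=-(-1)^{(k-1)-|I|}$, assemble to $-[K;G_2,\ldots,G_k]$. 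For the terms with $1\in I$, write $I=\{1\}\sqcup J$ with $J\subset\{2,\ldots,k\}$; then $(-1)^{k-|I|}=(-1)^{(k-1)-|J|}$ and $K^{G_I}=(K^{G_1})^{G_J}$, so these terms assemble to $[K^{G_1};G_2,\ldots,G_k]$. Adding the two contributions yields (2).

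For (3) I would deduce it from (2). Applying (2) to the left-hand side, whose first entry is $S\cup T$, gives $[K^{S\cup T};G_2,\ldots,G_k]-[K;G_2,\ldots,G_k]$, and since $S$ and $T$ are disjoint we have $K^{S\cup T}=(K^S)^T$. Applying (2) to each of the two terms on the right-hand side produces $[K^S;G_2,\ldots,G_k]-[K;G_2,\ldots,G_k]$ and $[(K^S)^T;G_2,\ldots,G_k]-[K^S;G_2,\ldots,G_k]$, which telescope to $[(K^S)^T;G_2,\ldots,G_k]-[K;G_2,\ldots,G_k]$, matching the left-hand side. (Alternatively one can prove (3) directly by partitioning the index set of the size-$k$ sum according to its intersections with the indices of $S$ and of $T$.) As a consequence of (2) and (3) one also reads off the rearrangement
\[ [K;S\cup T,G_2,\ldots,G_k]=[K;S,T,G_2,\ldots,G_k]+[K;S,G_2,\ldots,G_k]+[K;T,G_2,\ldots,G_k], \]
which converts a size-$k$ scheme whose first entry is the disconnected clasper $S\cup T$ into a size-$(k+1)$ scheme plus two size-$k$ schemes; this is the engine for (1).

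Part (1) is where I expect the main difficulty, and by iterating it suffices to prove $\calK_{n,k}(M)\subset\calK_{n,k+1}(M)$ for $1\le k<n$. Given a generator $[K;G_1,\ldots,G_k]$ with $\sum_i\deg G_i=n$ and $k<n$, at least one clasper---say $G_1$---has $\deg G_1\ge 2$ and hence contains a node. The idea is to exhibit $G_1$, via Habiro's clasper calculus, as the gluing at that node of two claspers $C_a,C_b$ with $\deg C_a+\deg C_b=\deg G_1$, and to invoke the node/commutator relation of \cite{Ha} (the same circle of moves used in Lemma~\ref{lem:unknotting}) to express the surgery $K^{G_1}$ in terms of the disjoint pair $C_a\cup C_b$. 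The effect at the level of schemes should be a congruence of the form $[K;G_1,G_2,\ldots,G_k]\equiv[K;C_a,C_b,G_2,\ldots,G_k]\pmod{\calK_{n+1}(M)}$, whose right-hand side is a size-$(k+1)$ scheme of total degree $n$, i.e. an element of $\calK_{n,k+1}(M)$; the rearrangement above then absorbs the auxiliary single-clasper terms. Finally $\calK_n(M)=\calK_{n,n}(M)$ follows by taking $k'=n$, at which point every clasper has degree $1$.

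The hard part is twofold. First, producing the splitting move with the correct degree bookkeeping: a connected degree-$d$ clasper must be traded for pieces whose degrees \emph{sum} to $d$, which is emphatically not a naive geometric cut of the underlying tree (such a cut raises the total degree), and genuinely requires the commutator structure of clasper surgery developed in \cite{Ha}. Second, and more delicate, is controlling the higher-degree error term living in $\calK_{n+1}(M)$: since the scheme filtration is decreasing one does not automatically have $\calK_{n+1}(M)\subset\calK_{n,k+1}(M)$, so this correction cannot simply be discarded. I would handle it by a simultaneous induction---on the degree $n$ and on the total number of nodes appearing in $G_1,\ldots,G_k$---arranging that each correction term is caught either by a strictly smaller node count (fewer nodes to split) or by the inductive hypothesis in the same degree at a larger size, and repeatedly using identities (2) and (3) to peel claspers off. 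Making this induction terminate, i.e. verifying that the bookkeeping of degree versus size stays consistent through the splitting, is the principal obstacle and is the substance of Habiro's argument on the cited page.
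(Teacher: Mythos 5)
First, a point of reference: the paper does not actually prove Lemma~\ref{lem:prop-scheme}; it quotes it from \cite[page~48]{Ha}. So your attempt has to be measured against Habiro's argument, not against anything written in this paper.

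Your proofs of (2) and (3) are correct and complete, and they are the standard ones: inclusion--exclusion over the index set together with the fact that surgery on a disjoint union of claspers is iterated surgery. One convention you use implicitly and should state: in $[K^{G_1};G_2,\ldots,G_k]$ and $[K^S;T,G_2,\ldots,G_k]$ the claspers written after the semicolon are the \emph{images} of the original ones under the surgery (clasper surgery is supported in a neighborhood of $G_1$, resp.\ $S$, and the identification of the surgered manifold with $M$ drags anything passing through that neighborhood).

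The genuine gap is in (1), precisely at the step you flag. Your plan is to split $G_1$ at a node into $C_a\cup C_b$ with $\deg C_a+\deg C_b=\deg G_1$ and to claim $[K;G_1,G_2,\ldots,G_k]\equiv[K;C_a,C_b,G_2,\ldots,G_k]\pmod{\calK_{n+1}(M)}$. Node-splitting does not give that congruence, and the error it leaves is of \emph{lower} degree, not higher. Take $n=2$, $k=1$, so $G_1=Y$ is a strict tree clasper of degree~$2$. The degree-preserving form of Habiro's move~9 replaces $Y$ by $I$-claspers $C_a\cup C_b$ in which the new leaf $f$ of $C_a$ encircles the band of $C_b$; it gives the exact equalities $K^{C_a\cup C_b}=K^{Y}$ and $K^{C_a}=K$ (with $C_b$ absent, $f$ bounds a disk missing everything, so surgery on $C_a$ is trivial). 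Hence, exactly,
\[ [K;C_a,C_b]=K^{Y}-K^{C_b},\qquad\text{so}\qquad [K;Y]=[K;C_a,C_b]+[K;C_b]. \]
The error $[K;C_b]=K^{C_b}-K$ is a degree-one scheme, lying in $\calK_{1,1}(M)$ and not in $\calK_{3}(M)$; since the filtration is decreasing it cannot be discarded, and your induction, which only anticipates corrections of higher degree, never meets it. (The other form of the splitting move, with Hopf-linked leaves, kills both error terms but raises the total degree by one, so it fails your bookkeeping in the opposite way.) Worse, no bookkeeping can absorb this error: as $Y$ and the splitting point vary, $C_b$ runs over essentially all strict $I$-claspers, so if one could conclude that both $[K;Y]$ and $[K;C_a,C_b]$ lie in $\calK_{2,2}(M)$, one would get $\calK_1(M)\subset\calK_2(M)$, collapsing the entire filtration and contradicting Theorem~\ref{thm:injective} (applied, say, to $M=S^2\times S^1$) together with the computation $\calA_1^\null(S^1;\Lambda)\cong\Q[t]/\Q$ of Appendix~\ref{s:A_1}. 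The moral is that any correct proof of (1) must use the admissibility restrictions on generating collections --- which leaves are required to be disk-leaves meeting $K$, and which clasper may carry the extra leaf --- so that pairs like $\{C_a,C_b\}$ above are excluded, and must combine the splitting with Habiro's zip construction/commutator calculus to return the newly created leaves to the knot, the resulting crossing-change corrections being the terms that genuinely live in higher filtration. That analysis is the substance of the cited page of \cite{Ha}, and it is absent from your proposal.
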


\begin{Def}\label{def:FTI}
Let $V$ be a vector space over $\Q$ and let $n\geq 0$. We say that a linear map 
\[ f:\calK(M)\to V\]
is a {\it finite type invariant of $M$-null type $n$} if $f(\calK_{n+1}(M))=0$. 
\end{Def}

A fundamental problem in the thoery of finite type invariant is to determine the structure of the quotient space $\calK_n(M)/\calK_{n+1}(M)$. The restriction of tree claspers given in the definition of $\calK_{n,k}(M)$ may not look natural. However, this definition is nice to relate $\calK_n(M)/\calK_{n+1}(M)$ with the space of $\Lambda$-colored Jacobi diagrams defined below.
\subsection{$\Lambda$-colored Jacobi diagrams}

A {\it Jacobi diagram on $S^1$} is a connected trivalent graph with oriented edges and with a distinguished simple oriented cycle, called the {\it Wilson loop} (\cite{BN1,BN2}, see also \cite[Ch.5]{CDM}). See Figure~\ref{fig:chord-diags} for examples of Jacobi diagrams with few vertices. A {\it labeled Jacobi diagram} is a Jacobi diagram $\Gamma$ equipped with bijections $\alpha:\{1,2,\ldots,2n\}\to V(\Gamma)$ and $\beta:\{1,2,\ldots,3n\}\to E(\Gamma)$, where $V(\Gamma)$ (resp. $E(\Gamma)$) is the set of vertices (resp. edges) of $\Gamma$ (including the edges in the Wilson loop). Let $E^W(\Gamma)$ be the subset of $E(\Gamma)$ consisting of edges in the Wilson loop and let $E^{nW}(\Gamma)=E(\Gamma)\setminus E^W(\Gamma)$. Let $V^W(\Gamma)$ be the subset of $V(\Gamma)$ consisting of vertices on the Wilson loop and let $V^{nW}(\Gamma)=V(\Gamma)\setminus V^W(\Gamma)$. Let $P(\Gamma)$ be the set of components in the subgraph of $\Gamma$ formed by $E^{nW}(\Gamma)$. A Jacobi diagram $\Gamma$ on $S^1$ with $V^{nW}(\Gamma)=\emptyset$ is called a {\it chord diagram}. A {\it vertex-orientation} of a trivalent vertex $v\in V^{nW}(\Gamma)$ is a cyclic ordering of the edges incident to $v$. A Jacobi diagram all of whose trivalent vertices are equipped with vertex-orientations is said to be vertex-oriented. 

Let $R$ be a commutative ring with 1. For a Jacobi diagram $\Gamma$ on $S^1$, an {\it $R$-coloring} of $\Gamma$ is an assignment of an element of $R$ to every edge of $\Gamma$. An $R$-coloring is represented by a map $\phi:E(\Gamma)\to R$. The {\it degree} of a Jacobi diagram is defined as half the number of vertices. A vertex of $\Gamma$ that is on the Wilson loop is called a {\it univalent vertex} and otherwise a {\it trivalent vertex}. The following definition is an analogue of the graphs considered in \cite{GR}. We write $H=H_1(M)$ and choose a basis $\{t_{(1)},t_{(2)},\ldots,t_{(k)}\}$ of $H$.

\begin{Def}
Let $\Lambda_M$ be the group ring $\Q[H]$. Let $\calA_n(S^1;\Lambda_M)$ be the vector space over $\Q$ spanned by pairs $(\Gamma,\phi)$, where $\Gamma$ is a Jacobi diagram of degree $n$ with vertex-orientation and $\phi$ is a $\Lambda_M$-coloring (resp. ) of $\Gamma$, quotiented by the relations AS, IHX, STU, FI, Orientation reversal, Linearity, Holonomy (Figure~\ref{fig:relations} and \ref{fig:relations2}) and automorphisms of oriented graphs.
\end{Def}
There is a canonical way to determine an equivalence class of vertex-orientation modulo the AS relation from labelings $\alpha,\beta$ on a Jacobi diagram (see e.g., \cite{CV}). 

\begin{Def}
Let $\widehat{\Lambda}_M$ be the total ring of fractions $Q(\Lambda_M)=\Q(H)$. Let $\calA_n(S^1;\widehat{\Lambda}_M)$ be the vector space over $\Q$ spanned by pairs $(\Gamma,\phi)$, where $\Gamma$ is a Jacobi diagram of degree $n$ with vertex-orientation and $\phi$ is a coloring $E(\Gamma)\to \widehat{\Lambda}_M$ such that $\phi(E^W(\Gamma))\subset\Lambda_M$, quotiented by the relations AS, IHX, STU, FI, Orientation reversal, Linearity, Holonomy (Figure~\ref{fig:relations} and \ref{fig:relations2}) and automorphisms of oriented graphs. 
\end{Def}

\begin{figure}
\fig{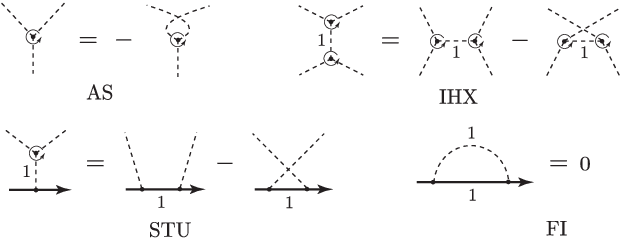}
\caption{The relations AS, IHX, STU and FI.}\label{fig:relations}
\end{figure}

\begin{figure}
\fig{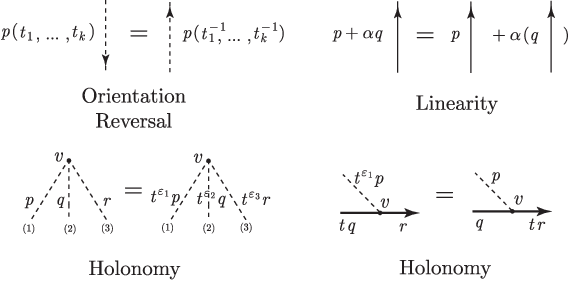}
\caption{The relations Orientation reversal, Linearity and Holonomy. $p,q,r\in \Lambda_M$ (or $p,q,r\in \widehat{\Lambda}_M$), $\alpha\in\Q$, $h=t_{(i)}$ for $i\in\{1,\ldots,r\}$. The exponent $\ve_i$ is $1$ if the $i$-th edge is oriented toward $v$ and otherwise $-1$. $\bar{p}$ is defined by $\bar{p}(t_{(1)},\ldots,t_{(r)})=p(t_{(1)}^{-1},\ldots,t_{(r)}^{-1})$. The edge in the Linearity relation is either of $E^W(\Gamma)$ or of $E^{nW}(\Gamma)$. }\label{fig:relations2}
\end{figure}

We denote a pair $(\Gamma,\phi)$ by $\Gamma(\phi)$ or by $\Gamma(\phi(e_1),\phi(e_2),\ldots,\phi(e_\ell))$. We say that a $\Lambda_M$-colored graph $\Gamma(\phi)$ is a {\it monomial} if for each edge $e$ of $\Gamma$, $\phi(e)$ is an element of $H$. In this case, we may consider $\phi$ as a map $E(\Gamma)\to H$. There is a bijective correspondence between the equivalence class of a labeled monomial Jacobi diagram $\Gamma(\phi)$ modulo the Holonomy relation and the homotopy class of a continuous map $c:\Gamma\to K(H,1)$, or the cohomology class $[c]\in H^1(\Gamma;H)=[\Gamma,K(H,1)]$. We say that $\Gamma(\phi)$ is nullhomotopic if $c$ is nullhomotopic.

\begin{Def}
Let $\calA_n^\null(S^1;\Lambda_M)$ be the subspace of $\calA_n(S^1;\Lambda_M)$ spanned by the set $\calG_n^\null(S^1;\Lambda_M)$ of all monomial Jacobi diagrams such that\footnote{We consider the group structure of $H_1(M)$ as multiplication.} $\prod_{e\in E^W(\Gamma)}\phi(e)=1$. 
\end{Def}
Note that the natural map $\calA_n^\null(S^1;\Lambda_M)\to \calA_n(S^1;\widehat{\Lambda}_M)$ may not be injective, as pointed out in \cite[Remark~2.1]{Les3}.

\subsection{Surgery map $\psi_n$}

Let $\iota:M\to K(H,1)$ be a map that represents the class in $H^1(M;H)=[M,K(H,1)]$ corresponding to the identity in $\mathrm{Hom}(H_1(M),H_1(M))= H^1(M;H_1(M))$. Consider a monomial $\Lambda_M$-colored Jacobi diagram $\Gamma\in \calG_n^\null(S^1;\Lambda_M)$ and a piecewise smooth embedding $\rho:\Gamma\to M$ that preserves its vertex-orientation, namely, the wedge of outward tangent vectors of the three edges at a vertex gives the orientation of $M$, and such that the homotopy class of the composition $\Gamma \stackrel{\rho}{\to} M \stackrel{\iota}{\to} K(H,1)$ represents the $\Lambda_M$-coloring of $\Gamma$. We assign a collection of tree claspers $G=\{G_1,\ldots,G_k\}$ with only disk-leaves to $\rho$ as follows. We replace vertices in the embedded graph $\rho(\Gamma)$ with leaves or nodes of claspers as follows:
\[ \begin{split}
&\fig{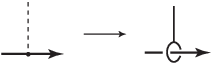}\\
&\fig{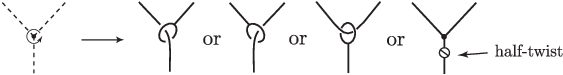}
\end{split} \]
so that each component in $P(\Gamma)$ is mapped to a (connected) tree clasper. Here is an example.
\[ \fig{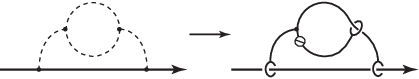} \]
One may see that the alternating sum $[K;G]$ represents an element of $\calK_{n,n}(M)$ by applying Habiro's move 9 of \cite[Proposition~2.7]{Ha} several times. 
\begin{Prop}\label{prop:psi-surjective}
The assignment $\Gamma\mapsto [K;G]$ induces a well-defined linear map
\[ \psi_n:\calA_n^\null(S^1;\Lambda_M)\to \calK_n(M)/\calK_{n+1}(M).\]
If $\pi_1(M)$ is abelian, then $\psi_n$ is surjective\footnote{In \cite{Ha}, Habiro has obtained similar result, showing that the natural surgery map gives a surjection from the space of Jacobi diagrams with $H_1(M)$-colored univalent vertices to the graded quotient in his filtration, without any assumption on $\pi_1(M)$. The techniques used in Proposition~\ref{prop:psi-surjective} are almost the same as those used in Habiro's result.}.
\end{Prop}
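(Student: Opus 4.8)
The plan is to verify, in order, that the assignment $\Gamma\mapsto[K;G]$ lands in the correct target, is independent of the auxiliary embedding $\rho$, descends through each defining relation of $\calA_n^\null(S^1;\Lambda_M)$, and finally to prove surjectivity under the abelian hypothesis. As already noted, Habiro's move~9 shows that $[K;G]$ represents an element of $\calK_{n,n}(M)=\calK_n(M)$, so the only issue in the target is to work modulo $\calK_{n+1}(M)$. Throughout, the governing principle, inherited from Habiro's clasper calculus, is that any local modification of the embedded graph realizable by a crossing change, a finger move, or the insertion of a commutator alters $[K;G]$ only by a forest scheme of degree $\geq n+1$, hence by an element of $\calK_{n+1}(M)$.

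First I would establish independence of $\rho$. Two embeddings $\rho,\rho'$ realizing the same $\Lambda_M$-coloring induce the same class in $H^1(\Gamma;H_1(M))$, so their leaves and edges carry identical homology data. I would connect $\rho$ to $\rho'$ by a generic homotopy in $M$, decomposed into ambient isotopies (which leave $[K;G]$ unchanged), crossing changes (which change it by a degree-$(n+1)$ scheme), and modifications that alter the homotopy class of a leaf within its fixed homology class (realized by pushing the leaf around a commutator, again a higher-degree change by the clasper slide and IHX relations). Hence $[K;G]\bmod\calK_{n+1}(M)$ depends only on the homology data, which is exactly the information retained by a monomial $\Lambda_M$-coloring after imposing the Holonomy relation.

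Next I would check the defining relations one at a time. The relation AS corresponds to a half-twist at a node, which introduces the expected sign; IHX is the topological IHX identity for tree claspers, valid modulo $\calK_{n+1}(M)$; STU follows from sliding a disk-leaf along the Wilson loop $K$; and FI expresses that a leaf of trivial homology contributes a surgery that cancels in the alternating sum. The coloring relations are matched as follows: Orientation reversal corresponds to reversing a leaf's orientation and thereby inverting its $H_1$-class, Linearity is absorbed by the $\Q$-linear extension of $\psi_n$ from monomials together with the homology-only dependence established above, and Holonomy is precisely the statement that $[K;G]$ sees only the class in $H^1(\Gamma;H_1(M))$. Invariance under graph automorphisms is immediate, since $[K;G]$ depends only on the embedded picture. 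The step I expect to be the main obstacle is the topological IHX relation together with the homotopy-to-homology reduction of the previous paragraph, as both rest on the full strength of Habiro's clasper calculus, from which the required graded identities are inherited.

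Finally, for surjectivity when $\pi_1(M)$ is abelian, I would use Lemma~\ref{lem:prop-scheme}(1) to write $\calK_n(M)=\calK_{n,n}(M)$ and then reduce, via the expansion rules of Lemma~\ref{lem:prop-scheme}(2)--(3) and Habiro's move~9, to forest schemes $[K;G]$ in which each tree clasper is in the standard form produced by $\psi_n$: disk-leaves meeting $K$ at univalent vertices and interior leaves recording homotopy data. Reading off the underlying trivalent graph with its Wilson loop and assigning to each edge the $H_1$-class of the corresponding leaf yields a monomial diagram $\Gamma(\phi)$, and the nullhomologous condition on $K$ forces $\prod_{e\in E^W(\Gamma)}\phi(e)=1$, so $\Gamma(\phi)\in\calG_n^\null(S^1;\Lambda_M)$. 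The abelian hypothesis enters exactly here: since $H_1(M)=\pi_1(M)$, the homotopy class of every leaf is faithfully recorded by its $H_1$-coloring, so no clasper data is lost and $\psi_n(\Gamma(\phi))$ recovers the given generator. Therefore $\psi_n$ is surjective.
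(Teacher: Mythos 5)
Your well-definedness argument follows essentially the paper's own route: decompose a generic homotopy between two realizations of the same coloring into isotopies, crossing changes, and changes of the homotopy class of edges within a fixed homology class, each of which costs only an element of $\calK_{n+1}(M)$, then check the relations by clasper calculus. That half is fine in outline (one quibble: FI concerns an isolated chord, whose associated $I$-clasper surgery is trivial by isotopy, rather than ``a leaf of trivial homology'').

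The surjectivity argument, however, has a genuine gap. The generators of $\calK_n(M)$ are $M$-null forest schemes, which are allowed to contain one tree clasper having a leaf that is an \emph{annulus}, nullhomologous in $M$ --- not a disk-leaf. The image of $\psi_n$, by construction, consists of schemes whose claspers carry \emph{only} disk-leaves. No application of Lemma~\ref{lem:prop-scheme}~(2)--(3) or of Habiro's move~9 removes such an annulus leaf, so your reduction ``to the standard form produced by $\psi_n$'' never reaches these generators; indeed your phrase ``interior leaves recording homotopy data'' suggests the image of $\psi_n$ contains such leaves, which it does not. Moreover, your stated use of the abelian hypothesis --- that the homotopy class of every leaf is faithfully recorded by its $H_1$-coloring --- addresses a well-definedness issue that is already settled, with no hypothesis on $\pi_1(M)$, by the bordism-invariance of $[K;G]$ modulo $\calK_{n+1}(M)$. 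The hypothesis is needed at exactly the step you skip: when $\pi_1(M)$ is abelian, a nullhomologous leaf is nullhomotopic, hence bounds an embedded disk after finitely many crossing changes, each of which alters the scheme only by an element of $\calK_{n+1}(M)$; therefore surgery on the $I$-clasper carrying that leaf can be replaced, modulo $\calK_{n+1}(M)$, by a sequence of surgeries on strict $I$-claspers, and the scheme becomes a sum of schemes with only disk-leaves, to which the reading-off argument applies. Without this step the proof of surjectivity is incomplete.
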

\begin{proof}
First, we prove that the assignment $\Gamma\mapsto [K;G]$ gives a well-defined map 
\[ \psi_n^0:\calG_n^\null(S^1;\Lambda_M)\to \calK_n(M)/\calK_{n+1}(M).\]
Let $[K;G]$ and $[K';G']$ be two forest schemes of degree $n$, size $k$ that correspond to a monomial $\Lambda_M$-colored graph $\Gamma\in \calG_n^\null(S^1;\Lambda_M)$, which have only disk-leaves. These forest schemes both belongs to $\calK_n(M)$. Then $[K;G]$ and $[K';G']$ are related by a sequence of the following moves:
\begin{enumerate}
\item A crossing change of knot.
\item A crossing change between edges of tree claspers.
\item A crossing change between an edge of a tree clasper and knot.
\item A bordism change of knot.
\item A bordism change of an edge of a tree clasper.
\item A swapping of a node and a disk-leaf that correspond to trivalent verteces.
\end{enumerate}
In each case, the difference of a change is given by an element of $\calK_{n+1,k+1}(M)$. Indeed, the case (1) is obvious. In the cases (2) and (3), the difference is of the form $[K;G_1\cup C,G_2,\ldots,G_k]-[K;G_1,\ldots,G_k]$, where $C$ is an $I$-clasper whose leaves may link with edges of tree claspers as follows. 
\[ \fig{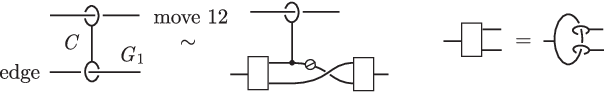} \]
The right hand side is obtained by Habiro's move 12 of \cite[Proposition~2.7]{Ha}. The two boxes can be moved by Habiro's move 11 along the tree claspers toward the univalent ends, so that the component $T$ including the node in the right hand side of the above picture does not have boxes. 
\[ \fig{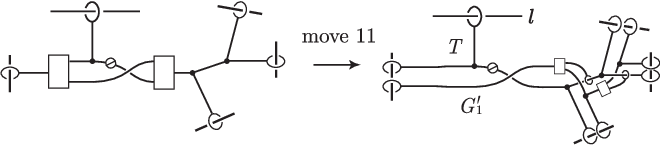} \]
Here if the two leaves of $C$ links with only $G_1$, then the string labeled $l$ may be doubled after the slides of the boxes. By Lemma~\ref{lem:prop-scheme} (3) we have
\[ \begin{split}
  &[K;G_1\cup C,G_2,\ldots,G_k]-[K;G_1,\ldots,G_k]\\
  &=[K;G'_1\cup T,G_2,\ldots,G_k]-[K;G_1,\ldots,G_k]\\
  &=[K;G'_1,G_2,\ldots,G_k]+[K^{G'_1};T,G_2,\ldots,G_k]-[K;G_1,\ldots,G_k]\\
  &=[K^{G'_1};T,G_2,\ldots,G_k]=0\in \calK_n(M)/\calK_{n+1}(M).
\end{split}\]
In the case (4), the difference is of the form $[K^C;G_1,\ldots,G_k]-[K;G_1,\ldots,G_k]=[K;C,G_1,\ldots,G_k]\in\calK_{n+1,k+1}(M)$, where $C$ is an $I$-clasper for $K$ with one disk-leaf and one leaf that is nullhomologous in $M$. In the case (5), the difference is of the form $[K;G_1\cup C,G_2,\ldots,G_k]-[K;G_1,\ldots,G_k]$, where $C$ is an $I$-clasper as follows.
\[ \fig{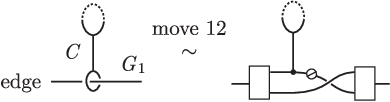} \]
The rest is similar to the case (2) except that a disk-leaf of $C$ is replaced with a nullhomologous leaf. For the case (6), see e.g., \cite[Appendix~E, p.389]{Oh}. 

Next, we shall see that the images of the relations for $\calA_n^\null(S^1;\Lambda_M)$ under $\psi_n^0$ is zero in $\calK_n(M)/\calK_{n+1}(M)$. The proofs that the AS, IHX relations are mapped by $\psi_n^0$ to $\calK_{n+1}(M)$ are similar as in \cite[\S{8.2}]{Ha} or \cite[Theorem~4.11]{GGP}. The STU relation holds by a result of \cite[\S{8.2}]{Ha} (see also \cite[Appendix~E]{Oh}). The FI relation can be checked as follows.
\[ \fig{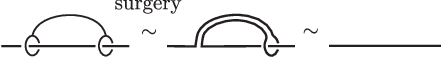} \]
The Orientation Reversal, Linearity and Holonomy relation hold because two embeddings $\rho_1,\rho_2:\Gamma\to M$ are edge-bordant if and only if $\iota\circ\rho_1$ and $\iota\circ\rho_2$ are homotopic, and $\psi_n^0$ is invariant under bordism changes of $\rho$, where we say that two embedded graphs are {\it edge-bordant} if they are related by homotopies and relative bordism changes of edges.

Finally, we shall check the surjectivity of $\psi_n$ when $\pi_1(M)$ is abelian. Let $[K;G]$ be any $M$-null forest scheme in $\calK_n(M)$. By the STU relations, we need only to consider the case where $[K;G]\in\calK_{n,n}(M)$, where $G$ consists of $n$ disjoint collection of $I$-claspers. If $G$ has only disk-leaves, then by Lemma~\ref{lem:prop-scheme} (3) it can be modified modulo $\calK_{n+1}(M)$ to a sum of forest schemes with only simple disk-leaves (a {\it simple} disk-leaf is a disk-leaf that intersects $K$ in a single point transversally. See Figure~\ref{fig:tree-clasper}). Each such forest scheme is clearly obtained by the construction $\psi_n$. If $G$ has an $I$-clasper $C$ with a nullhomologous leaf, then the leaf is nullhomotopic since $\pi_1(M)$ is abelian. Thus the surgery on $C$ can be replaced with a sequence of surgeries on strict $I$-claspers and $[K;G]$ can be rewritten as a sum of forest schemes with only strict $I$-claspers. The rest is the same as above.
\end{proof}

The main theorem of the present paper is the following.
\begin{Thm}\label{thm:injective}
If $H_1(M)=\Z$ and $M$ is fibered over $S^1$, then the following hold.
\begin{enumerate}
\item $\psi_1$ is injective.
\item There is a linear map $Z_2:\calK_2(M)/\calK_3(M)\to\calA_n(S^1;\widehat{\Lambda}_M)$ such that the composition of
\[ \calA_2^\null(S^1;\Lambda_M)\stackrel{\psi_2}{\to}\calK_2(M)/\calK_3(M)\stackrel{Z_2}{\to}\calA_2(S^1;\widehat{\Lambda}_M) \]
agrees with the natural map $\calA_2^\null(S^1;\Lambda_M)\to \calA_2(S^1;\widehat{\Lambda}_M)$.
\end{enumerate}
\end{Thm}
Theorem~\ref{thm:injective} will be proved in \S\ref{s:proof} by using perturbative invariants given in \S\ref{s:perturbative_inv}. Theorem~\ref{thm:injective} already shows that the classification of nullhomologous knots by finite $M$-null type invariants is rather fine. As a corollary to Proposition~\ref{prop:psi-surjective} and Theorem~\ref{thm:injective}, we have the following.
\begin{Cor}[{Part of a result of Lieberum \cite{Lie}}]
If $M=S^2\times S^1$, then the map $\psi_1$ is an isomorphism.
\end{Cor}

\if0
\begin{Conj}
Theorem~\ref{thm:injective} holds for all $n\geq 0$ and for all closed connected oriented 3-manifold $M$.
\end{Conj}
\fi

\subsection{Degree 1 part}\label{ss:degree_1}

The following remark is an expansion from a comment of K.~Habiro. Since any nullhomologous knot can be unknotted by a surgery on an $M$-null $I$-clasper, we have $\calK_0(M)/\calK_1(M)\cong \Q$. The degree 1 part is more complicated. Let $\varpi'$ be the preimage of $0$ of the natural map $\pi_0(LM)\to H_1(M)$ where $LM$ is the free loop space of $M$ and let $\Q\varpi'$ denote the vector space over $\Q$ spanned by the set $\varpi'$. Let 
\[ \eta:\calK_1(M)/\calK_2(M)\to \Q\varpi'\]
be the linear map that assigns to each forest scheme $[K;G_1]=K^{G_1}-K$ its homotopy class $[K^{G_1}]-[K]$. This is well-defined because any forest scheme of degree 2, size 2 contains a strict $I$-clasper and because surgery on a strict $I$-clasper does not change the free homotopy class of knot. Now we assume that $H_1(M)=\Z$ and that $M$ is fibered over $S^1$. Then by Theorem~\ref{thm:injective} we have the following chain complex, which may be non-exact only at $\calK_1(M)/\calK_2(M)$.
\begin{equation}\label{eq:K1K2}
 0\longrightarrow \calA_1^\null(S^1;\Lambda) 
\stackrel{\psi_1}{\longrightarrow} \calK_1(M)/\calK_2(M)
\stackrel{\eta}{\longrightarrow} \Q\varpi'
\stackrel{\ve}{\longrightarrow} \Q
\longrightarrow 0,
\end{equation}
where $\ve$ is the augmentation map. To make the sequence exact, it may be necessary to consider a ``noncommutative'' refinement of the map $\psi_n$ as in \cite{Ka, KL, Va2} by the resolutions of singular knots (or allowing only strict $I$-claspers) and to restrict underlying knots in the definition of the filtration $\calK_n(M)$ to (possibly base-pointed) nullhomotopic knots in $M$. 

We will see in Appendix~\ref{s:A_1} that there is a natural isomorphism $\calA_1^\null(S^1;\Lambda)\cong \Q[t]/\Q$.

\mysection{Perturbative invariants of nullhomologous knots in $M$}{s:perturbative_inv}

Let $\Lambda=\Q[t^{\pm 1}]$ and $\widehat{\Lambda}=\Q(t)$. In this section, we define a map $Z_n:\calK(M)\to \calA_n(S^1;\widehat{\Lambda})$ for $n=1,2$ (Theorem~\ref{thm:invariance}). Roughly, $Z_n$ is defined by intersections of certain fundamental chains in Lescop's equivariant version of the configuration spaces (\S\ref{ss:e-conf}). We use parametrized Morse theory to give explicit fundamental chains in the equivariant configuration spaces (\S\ref{ss:FMF}, \S\ref{ss:Z-path}, \S\ref{ss:e-propagator}). It turns out that $Z_n$ can be interpreted as the trace of a generating function of counts of certain graphs in $M$, which we call Z-graphs (\S\ref{ss:moduli_hori}). 

\subsection{Equivariant configuration space}\label{ss:e-conf}

From now on we consider an oriented surface bundle $\kappa:M\to S^1$. Let $K:S^1\to M$ be a nullhomologous knot. We shall define a configuration space that is suitable to our purpose, based on Lescop's equivariant configuration space \cite{Les2, Les3, Les4}.

Let $\bConf_q(X)$ denote the Fulton--MacPherson--Kontsevich compactification of the configuration space $\Conf_q(X)$ of $q$ distinct points on a compact differentiable (real) manifold $X$ (see \cite{Ko1, FM, BT, Les1} etc. for detail). Let $\bConf_q(S^1)_{<}$ denote the component of $\bConf_q(S^1)$ for a fixed ordering of the points on $S^1$. Let $\bConf_{t,q}(M,K)$ be the pullback in the following commutative diagram:
\[ \xymatrix{
  \bConf_{t,q}(M,K) \ar[r] \ar[d] & \bConf_{t+q}(M) \ar[d]^-{\pi_{t,q}}\\
  \bConf_q(S^1)_{<} \ar[r]_-{C_q^K} & \bConf_q(M)
}\]
where $\pi_{t,q}$ is the natural map associated to the projection $\Conf_{t+q}(M)\to \Conf_q(M)$ and $C_q^K$ is the smooth extension of $K\times\cdots\times K:\Conf_q(S^1)_<\to \Conf_q(M)$. 

Let  $\Gamma$ be a labeled Jacobi diagram with $q$ univalent and $t$ trivalent vertices. By the labeling $\alpha:\{1,2,\ldots,t+q\}\to V(\Gamma)$, we identify $E(\Gamma)$ with the set of ordered pairs $(i,j)$, $i,j\in \{1,2,\ldots,t+q\}$. Let $M^\Gamma$ denote the set of tuples
\[ (x_1,x_2,\ldots,x_{2n},\{\gamma_{ij}\}_{(i,j)\in E(\Gamma)}), \]
where $x_i\in M$ and $\gamma_{ij}$ is the homotopy class of continuous maps $c_{ij}:[0,1]\to S^1$ relative to the endpoints such that $c_{ij}(0)=\kappa(x_i)$ and $c_{ij}(1)=\kappa(x_j)$. We consider $M^\Gamma$ as a topological space as follows. Let $C^0(\Gamma,S^1)$ be the space of continuous maps $\Gamma\to S^1$ equipped with the $C^0$-topology and let $C^\Gamma$ be the space that is the pullback in the following commutative diagram.
\[ \xymatrix{
  C^\Gamma \ar[r] \ar[d] & C^0(\Gamma,S^1) \ar[d]\\
  M^{t+q} \ar[r]^-{\kappa\times\cdots\times\kappa} & (S^1)^{t+q}
}\]
The fiberwise quotient map $C^\Gamma\to M^\Gamma$ by the homotopy relation of edges gives $M^\Gamma$ the quotient topology. 

Let $\bConf_\Gamma(M)$ denote the space obtained from $M^\Gamma$ by blowing-up along all the lifts of the diagonals in $M^{t+q}$. Let $\bConf_\Gamma(M,K)$ denote the pullback in the following commutative diagram:
\[ \xymatrix{
  \bConf_\Gamma(M,K) \ar[r] \ar[d] & \bConf_\Gamma(M) \ar[d]\\
  \bConf_q(S^1)_< \ar[r]_-{C_O^K} & \bConf_O(M)
}\]
where $O$ is the subgraph of $\Gamma$ given by the Wilson loop and $C_O^K$ is the natural map induced by $K\times\cdots\times K$, i.e., $C_q^K$ together with the relative homotopy classes of arcs represented by those in $K$. The forgetful map $\overline{\pi}:\bConf_\Gamma(M,K)\to \bConf_{t,q}(M,K)$ is a $\Z^{3n-q}$-covering ($3n-q=|E^{nW}(\Gamma)|$). Since $\bConf_{\Gamma}(M,K)$ is naturally a $\Z^{3n-q}$-space by the covering translation, the twisted homology
\[ H_*(\bConf_{\Gamma}(M,K);\Q)\otimes_{\Lambda_{\Gamma}}\widehat{\Lambda}_{\Gamma}, \]
where $\Lambda_{\Gamma}=\Q[\{t_{ij}^{\pm 1}\}_{(i,j)\in E^{nW}(\Gamma)}]$ and $\widehat{\Lambda}_{\Gamma}=\bigotimes_{(i,j)\in E^{nW}(\Gamma)} \Q(t_{ij})$ (tensor product of $\Q$-modules), is defined. Here, $\Lambda_{\Gamma}$ acts on $\widehat{\Lambda}_{\Gamma}$ by $(\prod_{(i,j)}t_{ij}^{k_{ij}})(\bigotimes_{(i,j)}f_{ij}) = \bigotimes_{(i,j)}t_{ij}^{k_{ij}}f_{ij}$. In fact, the $\Gamma$ in the definition of $\bConf_\Gamma(M,K)$ could be any graph. We will consider the interval $K_2$ for $\Gamma$.

\subsection{Fiberwise Morse functions and their concordances}\label{ss:FMF}

Let $\kappa:M\to S^1$ be a smooth fiber bundle with fiber diffeomorphic to a closed connected oriented 2-manifold $\Sigma$. We equip $M$ with a Riemannian metric. We fix a fiberwise Morse function $f:M\to \R$ and its gradient $\xi:M\to \mathrm{Ker}\,d\kappa$ along the fibers that satisfies the parametrized Morse--Smale condition, i.e., the descending manifold loci and the ascending manifold loci are mutually transversal in $M$. We consider only fiberwise Morse functions that are {\it oriented}, i.e., the bundles of negative eigenspaces of the Hessians along the fibers on the critical loci are oriented. There always exists an oriented fiberwise Morse function on $M$ (e.g., \cite{Wa2}). 

A {\it generalized Morse function (GMF)} is a $C^\infty$ function on a manifold with only Morse or birth-death singularities (\cite[Appendix]{Ig}). A {\it fiberwise GMF} for a fiber bundle $\kappa:E\to B$ is a $C^\infty$ function $f:E\to \R$ whose restriction $f_c=f|_{\kappa^{-1}(c)}:\kappa^{-1}(c)\to \R$ is a GMF for all $c\in B$. A {\it critical locus} of a fiberwise GMF is the subset of $E$ consisting of critical points of $f_c$, $c\in B$. A fiberwise GMF is {\it oriented} if it is oriented outside birth-death loci and if birth-death pairs near a birth-death locus have incidence number 1.

It is known (e.g. Framed Function Theorem of \cite{Ig}) that for a pair of fiberwise Morse functions $f_0,f_1:M\to \R$, there exists a homotopy $\widetilde{f}=\{f_s\}_{s\in [0,1]}$ between $f_0$ and $f_1$ in the space of oriented GMF's on $M$, which gives an oriented fiberwise GMF on the surface bundle $\kappa\times\mathrm{id}:M\times [0,1]\to S^1\times [0,1]$. 

\begin{Def}
We say that the homotopy $\widetilde{f}$ is a {\it concordance} if each birth-death locus of $\wf$ in $M\times [0,1]$ projects by $\kappa\times\mathrm{id}$ to a simple closed curve that is not nullhomotopic in $S^1\times[0,1]$. 
\end{Def}

\subsection{Z-paths}\label{ss:Z-path}

Let $\pi:\wM\to M$ be the $\Z$-covering associated to $\kappa$. Let $\widetilde{\kappa}:\widetilde{M}\to \R$ be the lift of $\kappa$. Let $\wf:\wM\to \R$ denote the $\Z$-invarint lift $\wf=f\circ \pi$ and let $\widetilde{\xi}$ denote the lift of $\xi$. We say that a piecewise smooth embedding $\sigma:[\mu,\nu]\to \wM$ is {\it vertical} if $\mathrm{Im}\,\sigma$ is included in a single fiber of $\widetilde{\kappa}$ and say that $\sigma$ is {\it horizontal} if $\mathrm{Im}\,\sigma$ is included in a critical locus of $\wf$. We say that a vertical embedding (resp. horizontal embedding) $\sigma:[\mu,\nu]\to \wM$ is {\it descending} if $\wf(\sigma(\mu))\geq  \wf(\sigma(\nu))$ (resp. $\widetilde{\kappa}(\sigma(\mu))\leq \widetilde{\kappa}(\sigma(\nu))$). 

A {\it flow-line of $-\widetilde{\xi}$} is a vertical smooth embedding $\sigma:[\mu,\nu]\to \wM$ such that for each $T\in[\mu,\nu]$ that is not in the preimage of the union of critical loci, $d\sigma_T(\frac{\partial}{\partial T})$ is a multiple of $(-\widetilde{\xi})_{\sigma(T)}$ by a positive real number. 

\begin{Def}\label{def:al-path}
Let $x,y$ be two points of $\widetilde{M}$ such that $\widetilde{\kappa}(x)\leq\widetilde{\kappa}(y)$. A {\it Z-path from $x$ to $y$} is a sequence $\gamma=(\sigma_1,\sigma_2,\ldots,\sigma_n)$, $n\geq 1$, where
\begin{enumerate}
\item For each $i$, $\sigma_i$ is either vertical or horizontal.
\item For each $i$, $\sigma_i$ is a descending embedding $[\mu_i,\nu_i]\to \widetilde{M}$ for some real numbers $\mu_i,\nu_i$.
\item If $\sigma_i$ is vertical, then $\sigma_i$ is a flow line of $-\widetilde{\xi}$. If $\sigma_i$ is horizontal, then $\mu_i<\nu_i$.
\item $\sigma_1(\mu_1)=x$, $\sigma_n(\nu_n)=y$.
\item $\sigma_i(\nu_i)=\sigma_{i+1}(\mu_{i+1})$ for $1\leq i<n$.
\item If $\sigma_i$ is vertical (resp. horizontal) and if $i<n$, then $\sigma_{i+1}$ is horizontal (resp. vertical).
\item If $n=1$, then $\mu_1<\nu_1$.
\end{enumerate}
We say that two Z-paths are {\it equivalent} if they differ only by reparametrizations on segments. A Z-path in $M$ is defined as the composition of a Z-path in $\wM$ with the covering projection $\pi$.
\end{Def}

For generic $\xi$, there may be special vertical flow-line between critical loci, called {\it $1/1$-intersection}, which is the transversal intersection of the descending manifold locus of a critical locus of $\xi$ of index 1 and the ascending manifold locus of another critical locus of index 1. There may be finitely many $1/1$-intersections for generic $\xi$. Most of the vertical segments in Z-paths are $1/1$-intersections.

\subsection{Equivariant propagator}\label{ss:e-propagator}

Let $\xi$ be the fiberwise gradient of an oriented fiberwise Morse function on $M$. We say that a nonconstant Z-path $\gamma$ in $M$ with positive length is a {\it closed Z-path} if the endpoints of $\gamma$ coincide. A closed Z-path $\gamma$ gives a piecewise smooth map $\bar{\gamma}:S^1\to M$, which can be considered as a ``closed orbit'' in $M$. We will also call $\bar{\gamma}$ a closed Z-path. A closed Z-path has an orientation that is determined by the orientations of intersections of the loci of descending and ascending manifolds of $\widetilde{\xi}$. See \cite[\S{2.7}]{Wa3} for the detail. Then we define the sign $\ve(\gamma)\in\{-1,1\}$ and the period $p(\gamma)$ of $\gamma$ by 
\[ p(\gamma)=|\langle[d\kappa],[\bar{\gamma}]\rangle|,\quad 
\ve(\gamma)=\frac{\langle[d\kappa],[\bar{\gamma}]\rangle}{|\langle[d\kappa],[\bar{\gamma}]\rangle|}.
\]
Let $S(TM)$ be the oriented subbundle of $TM$ of unit tangent vectors. Let $S(T\gamma)$ be the pullback $\bar{\gamma}^*S(TM)$, which can be considered as a piecewise smooth 3-dimensional chain in $\partial\bConf_{K_2}(M)$. We say that two closed Z-paths $\gamma_1$ and $\gamma_2$ are {\it equivalent} if there is a degree 1 homeomorphism $g:S^1\to S^1$ such that $\bar{\gamma}_1\circ g=\bar{\gamma}_2$. The indices of horizontal segments in a closed Z-path must be all equal since a Z-path is descending. We define the index $\mathrm{ind}\,\gamma$ of a closed Z-path $\gamma$ to be the index of a horizontal segment (critical locus) in $\gamma$, namely, the index of the critical point of $f|_{\kappa^{-1}(c)}$ for any $c\in S^1$ that is the intersection of $\gamma$ with $\kappa^{-1}(c)$. 

Let $M_0=M\setminus\bigcup_{\gamma\,:\,\mathrm{critical\, locus}}\gamma$ and let $s_\xi:M_0\to S(TM_0)$ be the normalization $-\xi/\|\xi\|$ of the section $-\xi$. The closure $\overline{s_\xi(M_0)}$ in $S(TM)$ is a smooth manifold with boundary whose boundary is the disjoint union of circle bundles over the critical loci $\gamma$ of $\xi$, for a similar reason as \cite[Lemma~4.3]{Sh}. The fibers of the circle bundles are equators of the fibers of $S(T\gamma)$. Let $E^-_\gamma$ be the total space of the 2-disk bundle over $\gamma$ whose fibers are the lower hemispheres of the fibers of $S(T\gamma)$ which lie below the tangent spaces of the level surfaces of $\kappa$. Then $\partial\overline{s_\xi(M_0)}=\bigcup_\gamma \partial E_\gamma^-$ as sets. Let
\[ s_\xi^*(M)=\overline{s_\xi(M_0)}\cup \bigcup_\gamma E^-_\gamma\subset S(TM).\]
This is a 3-dimensional piecewise smooth manifold. We orient $s_\xi^*(M)$ by extending the natural orientation $(s_\xi^{-1})^*o(M)$ on $s_\xi(M_0)$ induced from the orientation $o(M)$ of $M$. The piecewise smooth projection $s_\xi^*(M)\to M$ is a homotopy equivalence and $s_\xi^*(M)$ is homotopic to $s_{\hat{\xi}}$. 

Let $C$ be a knot in $M$ such that $\langle[d\kappa],[C]\rangle=1$. Let $\acalM_{K_2}(\xi)$ be the set of all Z-paths in $M$. There is a natural structure of non-compact manifold with corners on $\acalM_{K_2}(\xi)$. For a closed Z-path $\gamma$, we denote by $\gamma^\irr$ the minimal closed Z-path such that $\gamma$ is equivalent to the iteration $(\gamma^\irr)^k$ for a positive integer $k$ and we call $\gamma^\irr$ the {\it irreducible} factor of $\gamma$. This is unique up to equivalence. If $\gamma=\gamma^\irr$, we say that $\gamma$ is irreducible. We orient $S(T\gamma^\irr)$ so that $[S(T\gamma^\irr)]=p(\gamma^\irr)[S(TC)]$. Note that this may not be the one naturally induced from the orientation of $\gamma^\irr$ but from $\ve(\gamma^\irr)\gamma^\irr$. 

\begin{Thm}[\cite{Wa2}]\label{thm:propagator}
Let $M$ be the mapping torus of an orientation preserving diffeomorphism $\varphi:\Sigma\to \Sigma$ of closed, connected, oriented surface $\Sigma$. Let $\xi$ be the fiberwise gradient of an oriented fiberwise Morse function $f:M\to \R$. 
\begin{enumerate}
\item There is a natural closure $\bacalM_{K_2}(\xi)$ of $\acalM_{K_2}(\xi)$ that has the structure of a countable union of smooth compact manifolds with corners.
\item Let $\bar{b}:\bacalM_{K_2}(\xi)\to M^{K_2}$ be the evaluation map, which assigns the pair of the endpoints of a Z-path $\gamma$ together with the homotopy class of $\kappa\circ \gamma$ relative to the endpoints. Let $B\ell_{\bar{b}^{-1}(\widetilde{\Delta}_M)}(\bacalM_{K_2}(\xi))$ denote the blow-up of $\bacalM_{K_2}(\xi)$ along $\bar{b}^{-1}(\widetilde{\Delta}_M)$. Then $\bar{b}$ induces a map $B\ell_{\bar{b}^{-1}(\widetilde{\Delta}_M)}(\bacalM_{K_2}(\xi))\to \bConf_{K_2}(M)$ and it represents a 4-dimensional $\widehat{\Lambda}$-chain $Q(\xi)$ in $\bConf_{K_2}(M)$ that satisfies the identity
\[ \partial Q(\xi)=s_{\xi}^*(M)+\sum_\gamma (-1)^{\mathrm{ind}\,\gamma}\ve(\gamma)\,t^{p(\gamma)}\,S(T\gamma^\irr), \]
where the sum is taken over equivalence classes of closed Z-paths in $M$. Moreover, for a product $C(t)$ of cyclotomic polynomials, $C(t)\det(1-t\varphi_{*1})Q(\xi)$ is a $\Lambda$-chain, where $\varphi_{*1}:H_1(\Sigma;\Q)\to H_1(\Sigma;\Q)$ is the map induced by $\varphi$. 
\end{enumerate}
\end{Thm}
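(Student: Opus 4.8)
The plan is to realize $Q(\xi)$ as the fundamental chain carried by the compactified moduli space of AL-paths, in direct analogy with the configuration-space propagator of Bott–Taubes and its equivariant refinement due to Lescop \cite{Les2, Les3, Les4}. The geometric picture is that, as the ordered pair of endpoints $(x,y)$ ranges over $M\times M$, the AL-paths from $x$ to $y$ sweep out a $4$-dimensional locus in the equivariant two-point configuration space, and one wants to show that this locus is a chain whose boundary detects exactly the ``short'' degenerations (where $x$ and $y$ collide, producing the gradient section $s_\xi^*(M)$) and the ``closed-orbit'' degenerations (where the path closes up into a periodic AL-path, producing the $S(T\gamma^\irr)$ terms).

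First I would establish the manifold-with-corners structure of part (1) by a degeneration analysis of AL-paths. By Definition~\ref{def:al-path} an AL-path is a finite alternating concatenation of horizontal gradient segments (possibly broken at critical loci) and vertical segments along critical loci. The codimension-one degenerations are of three types: an interior horizontal segment breaks at a critical locus (standard Morse breaking), an endpoint approaches a critical locus, or the number of segments jumps. Using the parametrized Morse–Smale condition to guarantee transversality of the relevant ascending and descending manifold loci, and the gluing theorems of Morse theory adapted to the fiberwise setting, these strata assemble into smooth manifolds with corners. Because an AL-path may wind around the base $S^1$ arbitrarily many times before closing or terminating, the number of combinatorial types is infinite, which forces $\bacalM_{K_2}(\xi)$ to be a \emph{countable} union of compact pieces; genericity of $\xi$ keeps the top-dimensional strata disjoint.

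Next I would construct the chain and compute its boundary. The evaluation map $\bar{b}$ records the endpoints together with the relative homotopy class of $\kappa\circ\gamma$ in $S^1$, i.e.\ the winding data that remembers the covering translation; blowing up $\bar{b}^{-1}(\widetilde{\Delta}_M)$ resolves the collision of endpoints and produces the direction of approach, so that $\bar{b}$ lifts to a map into $\bConf_{K_2}(M)$ whose image is the $4$-chain $Q(\xi)$. Since the contributions are organized by winding sector, summing over all sectors involves infinitely many terms weighted by powers of $t$, which is exactly why $Q(\xi)$ is a priori only a $\widehat{\Lambda}$-chain. To identify $\partial Q(\xi)$ I would match the boundary strata of $\bacalM_{K_2}(\xi)$ with the right-hand side: the collision stratum contributes the gradient section, where the caps $E_\gamma^-$ over the critical loci are precisely the correction that upgrades $s_\xi(M_0)$ to the closed chain $s_\xi^*(M)$ (as in \cite{Sh}); the closed-orbit stratum contributes $S(T\gamma^\irr)$, with the period $p(\gamma)$ appearing as the power of $t$, the direction of winding as $\ve(\gamma)$, and the Morse index as the sign $(-1)^{\ind\gamma}$; and the remaining interior breakings leave the endpoints and the winding class unchanged, so they map into the interior of $Q(\xi)$ (equivalently, the two top strata meeting along such a face induce opposite orientations and cancel) and contribute nothing to $\partial Q(\xi)$.

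Finally, the integrality statement that $(1-t)^2\Delta(t)\,Q(\xi)$ is a $\Lambda$-chain I would obtain by analyzing the denominators produced by the infinite winding sums: summing a geometric family of winding sectors yields factors of $(1-t)^{-1}$, and the twisted homology of the fiberwise configuration is governed by the Alexander polynomial $\Delta(t)$ of $\kappa$, so clearing denominators by $(1-t)^2\Delta(t)$ returns Laurent-polynomial coefficients. \textbf{The main obstacle} I anticipate is the boundary bookkeeping on this infinite moduli space: verifying simultaneously that the corner structure is as claimed, that the interior facets cancel with the correct orientations, that the caps $E_\gamma^-$ assemble with consistent signs into $s_\xi^*(M)$, and that the series over winding sectors converges to a well-defined $\widehat{\Lambda}$-chain. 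Keeping all the orientation and index signs coherent across the horizontal/vertical alternation is the delicate heart of the argument; once this is in place, the integrality follows formally.
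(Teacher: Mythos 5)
First, a structural point: the paper you are working from does not prove this theorem at all. Theorem~\ref{thm:propagator} is imported verbatim, with attribution, from the author's earlier paper [Wa2] (arXiv:1403.8030), where the equivariant propagator is constructed explicitly; the present paper only uses the statement. So there is no internal proof to compare against, and the relevant question is whether your sketch would stand on its own as a reconstruction of that construction.

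Your geometric picture is the correct one and matches the strategy of [Wa2] in outline: compactify the space of AL-paths, identify the codimension-one faces with endpoint collisions (capped by the hemispherical disk bundles $E_\gamma^-$ to give $s_\xi^*(M)$, as in [Sh]) and with degenerations onto closed AL-paths (giving the $t^{p(\gamma)}S(T\gamma^{\mathrm{irr}})$ terms), with interior Morse breakings cancelling in pairs. But as a proof it is schematic exactly where the theorem has its content, and two of the deferred points are genuine gaps rather than routine verifications. First, the claims in part (1) --- that the closure is a \emph{countable} union of compact manifolds with corners with disjoint codimension-0 strata, and that the blown-up evaluation map represents a locally finite chain --- do not follow from ``gluing theorems of Morse theory adapted to the fiberwise setting''; compactness holds only sector by sector in the winding data, and one needs the explicit enumeration of AL-paths as words in the finitely many $1/1$-intersections, alternating with vertical descents, to see both the compactness of each piece and the local finiteness of the sum defining $Q(\xi)$. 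Second, and more seriously, your derivation of the denominator bound is a heuristic that cannot produce the specific polynomial $(1-t)^2\Delta(t)$: ``geometric series give $(1-t)^{-1}$'' and ``the twisted homology is governed by $\Delta(t)$'' are equally consistent with denominators such as $(1-t)\Delta(t)^2$ or higher powers. In the actual construction the generating function of AL-paths over all winding sectors is obtained by inverting a finite matrix over $\Lambda$ whose entries record $1/1$-intersection counts and the monodromy action, and whose determinant is, up to units, controlled by $\det(tI-\varphi_*)\doteq\Delta(t)$ together with the $(1-t)$ factors coming from the vertical end-segments (equivalently, by the Lefschetz zeta function of the monodromy $\varphi$); it is this explicit inversion that yields both the $\widehat{\Lambda}$-chain structure and the precise statement that $(1-t)^2\Delta(t)Q(\xi)$ is a $\Lambda$-chain. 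Without it, the final assertion of part (2) is asserted rather than proved --- which is precisely why the present paper cites [Wa2] instead of reproving the result.
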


\subsection{Equivariant intersection form}\label{ss:count}

Let $\kappa:M\to S^1$ be a fibration as above. Fix a closed connected oriented 2-submanifold $\Sigma$ of $M$ such that the oriented bordism class of $\Sigma$ in $M$ corresponds to $[\kappa]$ via the canonical isomorphism $\Omega_2(M)=H_2(M)\cong H^1(M)$. Let $\Gamma$ be a labeled oriented Jacobi diagram of degree $n$ and let $E^I(\Gamma)$ (resp. $E^\rho(\Gamma)$) be the subset of $E^{nW}(\Gamma)$ consisting of non self-loop edges (resp. self-loop edges).
\begin{enumerate}
\item If $e\in E^I(\Gamma)$, then let $\psi_e:\bConf_{\Gamma}(M,K)\to \bConf_{K_2}(M)$ denote the projection that gives the endpoints of $e$ together with the associated path in $S^1$. Take a compact oriented 4-submanifold $F_e$ in $\bConf_{K_2}(M)$ with corners.
\item If $e\in E^\rho(\Gamma)$, then let $\psi_e:\bConf_{\Gamma}(M,K)\to M$ denote the projection that gives the unique endpoint of $e$. Take a compact oriented 1-submanifold $F_e$ in $M$ with boundary.
\end{enumerate}
Note that in both cases $F_i$ is of codimension 2. Now we put
\[\begin{split}
&E^{nW}(\Gamma)=\{i_1,\ldots,i_{3n-q}\},\quad E^W(\Gamma)=\{j_1,\ldots,j_q\},\\
&V^{nW}(\Gamma)=\{k_1,\ldots,k_{2n-q}\},\quad V^W(\Gamma)=\{\ell_1,\ldots,\ell_q\}.
\end{split}\]
Then we define
\[ \langle F_{i_1},F_{i_2},\ldots,F_{i_{3n-q}}\rangle_\Gamma
=\bigcap_{e=1}^{3n-q} \psi_{i_e}^{-1}(F_{i_e}), \]
which gives a compact 0-dimensional submanifold in $\bConf_\Gamma(M,K)$ if the intersection is transversal. We equip each point $(u_{\ell_1},\ldots,u_{\ell_q},v_{k_1},\ldots,v_{k_{2n-q}};\gamma_1,\gamma_2,\ldots,\gamma_{3n})$ of $\langle F_{i_1},F_{i_2},\ldots,F_{i_{3n-q}}\rangle_\Gamma$ with a coorientation (a sign) in $\bConf_{\Gamma}(M,K)$ by
\[ \bigwedge_{e\in E^{nW}(\Gamma)} \psi_e^*\,o^*(F_e)\in \twedge^{6n-2q} T_{(u_{\ell_1},\ldots,u_{\ell_q},v_{k_1},\ldots,v_{k_{2n-q}})}\bConf_{t,q}(M,K). \]
Here, we identify a neighborhood of a point in $\bConf_\Gamma(M,K)$ with its image of the covering projection in $\bConf_{t,q}(M,K)$. The coorientation gives a sign as the sign of $\mu$ in the equation $\bigwedge_e\psi_e^*o^*(F_e)=\mu\, o(S^1)_{u_{\ell_1}}\wedge\cdots\wedge o(S^1)_{u_{\ell_q}}\wedge o(M)_{v_{k_1}}\wedge\cdots\wedge o(M)_{v_{k_{2n-q}}}$, where the order of the product is determined by the vertex labeling, namely, so that it agrees with the exterior product of $o(S^1)$ and $o(M)$ in the order of the vertex labeling. By this, $\langle F_{i_1},F_{i_2},\ldots,F_{i_{3n-q}}\rangle_\Gamma$ represents a 0-chain in $\bConf_{\Gamma}(M,K)$. This can be extended to generic tuples of codimension 2 $\Q$-chains by multilinearity. We will denote the homology class of $\langle F_{i_1},F_{i_2},\ldots,F_{i_{3n-q}}\rangle_\Gamma$ (integer) by the same notation. Note that a point of $\bConf_\Gamma(M,K)$ possesses canonical homotopy classes of edges of $E^W(\Gamma)$ in $S^1$ determined by the embedding $C_O^K:\bConf_q(S^1)_<\to \bConf_O(M)$.

We extend the form $\langle\cdot,\ldots,\cdot\rangle_\Gamma$ to tuples of codimemsion 2 $\Lambda$-chains $F_k'$ in $\bConf_{K_2}(M)$ or $M$ as follows. When $k\in E^I(\Gamma)$, suppose that $F_k'$ is of the form $\sum_{\lambda_k=1}^{N_k} \mu_{\lambda_k}^{(k)} \sigma_{\lambda_k}^{(k)}$, where $\mu_{\lambda_k}^{(k)}\in \Lambda$ and $\sigma_{\lambda_k}^{(k)}$ is a compact oriented smooth 4-submanifold in $\bConf_{K_2}(M)[0]$, where $\bConf_{K_2}(M)[0]$ is the subspace of $\bConf_{K_2}(M)$ consisting of $(x_1,x_2,\gamma_{12})$ such that $\gamma_{12}$ has a lift $\sigma_{12}:[0,1]\to M$ connecting $x_1$ and $x_2$ with $[\kappa\circ\sigma_{12}]=\gamma_{12}$ relative to the boundary whose interior has algebraic intersection number $0$ with $\Sigma$. When $k\in E^\rho(\Gamma)$, suppose that $F_k'$ is of the form $\sum_{\lambda_k=1}^{N_k} \mu_{\lambda_k}^{(k)} \sigma_{\lambda_k}^{(k)}$, where $\mu_{\lambda_k}^{(k)}\in\Lambda$ and $\sigma_{\lambda_k}^{(k)}$ is a piecewise smooth path in $M$. Then we define
\[ \begin{split}
& \langle F_{i_1}',F_{i_2}',\ldots,F_{i_{3n-q}}'\rangle_\Gamma\\
&=\sum_{\lambda_1,\lambda_2,\ldots,\lambda_{3n-q}}
\mu_{\lambda_1}^{(1)}(t_{i_1})\mu_{\lambda_2}^{(2)}(t_{i_2})\cdots\mu_{\lambda_{3n-q}}^{(3n-q)}(t_{i_{3n-q}})
\bigl\langle \sigma_{\lambda_1}^{(1)},\sigma_{\lambda_2}^{(2)},\ldots,\sigma_{\lambda_{3n-q}}^{(3n-q)}\bigr\rangle_\Gamma\\
&\in C_0(\bConf_{\Gamma}(M,K);\Q),
\end{split} \]
which can be considered as a 0-chain in $\bConf_{t,q}(M,K)$ with coefficients in $\Lambda_\Gamma$. This is multilinear by definition. 

Next, we extend the form $\langle\cdot,\ldots,\cdot\rangle_\Gamma$ to tuples of codimension 2 $\widehat{\Lambda}$-chains in $\bConf_{K_2}(M)$ or $M$ as follows. Let $Q_{i_1},Q_{i_2},\ldots,Q_{i_{3n-q}}$ be codimension 2 $\widehat{\Lambda}$-chains in $\bConf_{K_2}(M)$ or $M$ depending on whether the corresponding edge is not a self-loop or a self-loop. Then there exist Laurent polynomials $p_1,p_2,\ldots,p_{3n-q}\in \Lambda\setminus\{0\}$ such that $p_kQ_{i_k}$ is a $\Lambda$-chain for each $k$. We define
\[ \begin{split}
&  \langle Q_{i_1},Q_{i_2},\ldots,Q_{i_{3n-q}}\rangle_\Gamma\\
&=\langle p_1Q_{i_1},p_2Q_{i_2},\ldots,p_{3n-q}Q_{i_{3n-q}}\rangle_\Gamma
\,p_1(t_{i_1})^{-1} p_2(t_{i_2})^{-1}\cdots  p_{3n-q}(t_{i_{3n-q}})^{-1}\\
&\in C_0(\bConf_{\Gamma}(M,K);\Q)\otimes_{\Lambda_\Gamma}\widehat{\Lambda}_\Gamma,
\end{split} \]
which can be considered as a 0-chain in $\bConf_{t,q}(M,K)$ with coefficients in $\widehat{\Lambda}_\Gamma$. This does not depend on the choices of $p_1,\ldots,p_{3n-q}$ and this is $\widehat{\Lambda}$-multilinear by definition. Note that the multilinear form $\langle\cdot,\ldots,\cdot\rangle_\Gamma$ depends on the choice of $\Sigma$. 

We define a linear map 
\[ \Tr_\Gamma:C_0(\bConf_\Gamma(M,K))\otimes_{\Lambda_\Gamma}\widehat{\Lambda}_\Gamma \to \calA_n(S^1;\widehat{\Lambda}) \]
by taking the homology class (rational number) in $H_0(\bConf_{2n}(M);\Q)$ on the chain side and by giving the coloring of $\Gamma$ from the coefficient and the $\Lambda$-holonomies on edges in $E^W(\Gamma)$.

\subsection{Definition of $Z_n$}\label{ss:def_Z}

Let $\kappa_1,\kappa_2,\ldots,\kappa_{3n}:M\to S^1$ be fibrations isotopic to $\kappa$. Let $f_i:M\to \R$, $i=1,2,\ldots,3n$, be oriented fiberwise Morse functions for $\kappa_i$ such that $(\kappa_i,f_i)$ is concordant to a pair isotopic to $(\kappa,f)$. Let $\xi_i$ be the fiberwise gradient of $f_i$. Let $Q(\xi_i)$ be the equivariant propagator in Theorem~\ref{thm:propagator} for $\xi_i$. We define the 1-cycle
\[ Q'(\xi_i)=\sum_\gamma (-1)^{\mathrm{ind}\,\gamma}\ve(\gamma)\,t^{p(\gamma)}\gamma^\irr\in C_1(M;\Q)\otimes_\Q\widehat{\Lambda} \]
in $M$, where the sum is over equivalence classes of lifts of all closed Z-paths $\gamma$ for $\xi_i$ considered as oriented 1-cycles. This is an infinite sum but is well-defined as a $\widehat{\Lambda}$-chain. The orientation of a closed Z-path $\gamma$ is given by $\ve(\gamma)$ times the downward orientation on $\gamma$. Choosing the closed oriented surface $\Sigma\subset M$ and $\kappa_i,\xi_i$ generically, we may define 
\[ I_\Gamma(K)=\Tr_\Gamma\langle Q^\circ(\xi_{i_1}),Q^\circ(\xi_{i_2}),\ldots,Q^\circ(\xi_{i_{3n-q}}) \rangle_\Gamma\in \calA_n(S^1;\widehat{\Lambda}), \]
where $Q^\circ(\xi_i)$ is $Q(\xi_i)$ or $Q'(\xi_i)$ depending on whether the corresponding edge in $\Gamma$ is not a self-loop or a self-loop. 
\begin{Thm}\label{thm:invariance}
For $n\geq 1$, we define
\[ Z_n(K)=\frac{1}{2^n(2n)!(3n)!}\sum_{\Gamma}I_\Gamma(K)\in \calA_n(S^1;\widehat{\Lambda}), \]
where the sum is over all labeled Jacobi diagrams on $S^1$ of degree $n$ for all possible edge orientations (Jacobi diagrams of degree 1 and 2 are shown in Figure~\ref{fig:chord-diags}.). For $n=1,2$, $Z_n(K)$ is invariant under isotopy of $K$ and concordance of $f$. 
\end{Thm}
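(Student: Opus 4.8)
The plan is to treat $Z_n(K)$ as an equivariant intersection number and to prove invariance by the standard cobordism argument of configuration-space integrals \cite{BT, Les1, Wa3}: a generic $1$-parameter family of the data produces a $1$-dimensional parametrized intersection locus whose algebraic boundary is the difference of the two endpoint values plus a sum of codimension-one face contributions, so the whole problem reduces to showing that these face contributions vanish in $\calA_n(S^1;\widehat{\Lambda})$. First I would note that each $I_\Gamma(K)$ is a transverse count of $0$-dimensional points in $\bConf_\Gamma(M,K)$, hence locally constant, so only large deformations require proof. Both asserted invariances fit this scheme: an isotopy $K_s$ of the knot and a concordance $\wf=\{f_s\}$ of the fiberwise Morse function each give such a generic family (for the latter the propagator varies through the family $Q(\xi_s)$ of Theorem~\ref{thm:propagator}), and independence of the auxiliary surface $\Sigma$ and of the choices $\kappa_i,\xi_i$ is obtained by varying those instead. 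In every case the face analysis is the same.

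The key step is the classification and cancellation of the codimension-one faces, which I would organize into three groups. First, the collision faces of the Fulton--MacPherson--Kontsevich compactification: a face where two trivalent vertices coalesce is cancelled, after summing over all labeled $\Gamma$, by the IHX and AS relations, and a face where an interior vertex meets the Wilson loop produces the STU relation, both of which are zero in $\calA_n(S^1;\widehat{\Lambda})$; faces with three or more colliding points, or collisions in the equivariant direction at infinity, vanish by dimension. Second, the propagator-boundary faces coming from $\partial Q(\xi_i)=s_{\xi_i}^*(M)+\sum_\gamma(-1)^{\ind\gamma}\ve(\gamma)\,t^{p(\gamma)}S(T\gamma^\irr)$: the section term $s_{\xi_i}^*(M)$ contributes a tangential configuration that is killed by AS together with the symmetrization over edge-orientations built into the definition of $Z_n$, while the closed-AL-path terms $S(T\gamma^\irr)$ match the boundary produced by the self-loop $1$-cycles $Q'(\xi_i)=\sum_\gamma\gamma$, so the two sets of contributions cancel against each other. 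Third, faces recording dependence on $\Sigma$ reduce to the previous two groups.

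The main obstacle is the bookkeeping of signs and holonomy weights in the second group: one must check that the $t^{p(\gamma)}$-weighted section contribution and the $S(T\gamma^\irr)$ contribution cancel modulo the Holonomy and Orientation-reversal relations, using that $(1-t)^2\Delta(t)Q(\xi)$ is a genuine $\Lambda$-chain so that all equivariant intersection numbers are well-defined elements of $\widehat{\Lambda}$. The restriction to $n\le 2$ is precisely what makes the enumeration in the first group finite and explicit: the only degree-$1$ and degree-$2$ Jacobi diagrams are those of Figure~\ref{fig:chord-diags}, so only a single IHX-type collision and a single STU-type collision with the Wilson loop occur, each cancelling on the nose, and no higher anomalous faces arise. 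I would therefore finish by listing the diagrams in degrees $1$ and $2$, matching each surviving face to one of the relations defining $\calA_n(S^1;\widehat{\Lambda})$, and concluding that only the endpoint terms remain, so that $Z_n(K_1)=Z_n(K_0)$.
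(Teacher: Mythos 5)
Your overall cobordism scheme is the same as the paper's, and your treatment of the two-vertex collision faces (IHX/AS for two trivalent vertices, STU for a trivalent vertex meeting the Wilson loop) agrees with the paper's handling of its type~(1) bifurcations in Lemma~\ref{lem:inv_1_2}. However, there is a serious gap: you dispose of all faces with three or more colliding points ``by dimension.'' That is false for precisely the faces that matter most, namely the \emph{anomalous} faces in which a separated component $\Gamma_0$ of $P(\Gamma)$, together with its adjacent Wilson-loop vertices, collapses to a single point \emph{on the knot}. These faces are not killed by any dimension count --- they are the Bott--Taubes anomaly --- and dealing with them is the entire content of Lemma~\ref{lem:anomaly}: when $\deg\,\underline{\Gamma_0}$ is even the contributions cancel by the symmetry argument of \cite[Theorem~1.6]{BT} (realized here as a cancellation between differently labeled graphs), and when $\deg\,\underline{\Gamma_0}=1$ they vanish by the FI relation. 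This is also the true source of the restriction $n\leq 2$: for $n\leq 2$ every anomalous subdiagram $\underline{\Gamma_0}$ has degree $1$ or $2$, so the two mechanisms above suffice, whereas for larger $n$ odd-degree anomalies of degree $\geq 3$ appear and are not known to vanish. Your explanation of the restriction (finiteness of the diagram enumeration, ``no higher anomalous faces arise'') is incorrect --- the enumeration is finite for every $n$, and anomalous faces do arise for $n=1,2$; they just vanish for the reasons above. Relatedly, the paper's type~(2) faces (a nonseparated full subgraph with $\geq 3$ vertices collapsing away from the knot) are handled by an equivariant analogue of Kontsevich's vanishing lemma as in \cite[Lemma~4.4]{Wa3}, not by dimension alone.

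A second gap is your proposed cancellation in the ``second group.'' You claim the closed-orbit terms $\sum_\gamma(-1)^{\ind\gamma}\ve(\gamma)\,t^{p(\gamma)}S(T\gamma^\irr)$ of $\partial Q(\xi_i)$ cancel against ``the boundary produced by the self-loop $1$-cycles $Q'(\xi_i)$''; but $Q'(\xi_i)$ is a $1$-\emph{cycle} (each closed AL-path is closed), so $Q'(\xi_i)\times I$ contributes no interior boundary terms and there is nothing for the closed-orbit faces to cancel against. The paper handles these faces by a genericity statement instead: in a generic $1$-parameter family, a configuration in which a non-self-loop edge degenerates to a closed AL-path does not occur simultaneously with the other required degenerations, so such faces contribute nothing, while the $s_{\xi_i}^*(M)$ part of $\partial Q(\xi_i)$ is exactly what feeds the IHX/STU cancellation of type~(1). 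Finally, for concordance invariance you cannot simply let ``the propagator vary through the family $Q(\xi_s)$ of Theorem~\ref{thm:propagator}'': a concordance passes through fiberwise GMFs with birth-death loci, where Theorem~\ref{thm:propagator} does not apply, and one needs the extension of the equivariant propagator across such families constructed in \cite[\S{4.3}--\S{4.6}]{Wa3}, as in the proof of Lemma~\ref{lem:inv_concordance}.
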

\begin{figure}
\fig{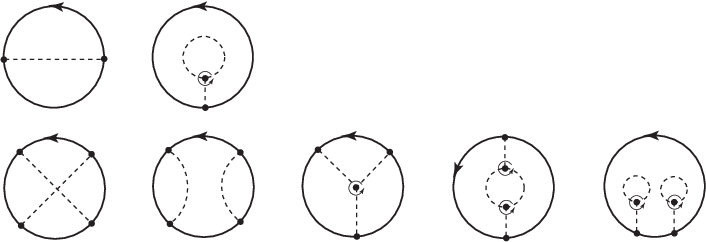}
\caption{Jacobi diagrams of degree 1 and 2 (edge-orientations omitted).}\label{fig:chord-diags}
\end{figure}
Let $\calE(\kappa)$ be the set of concordance classes of oriented fiberwise Morse functions for a fibration $\kappa:M\to S^1$. Theorem~\ref{thm:invariance} says that for $n=1,2$, $Z_n$ gives a family of knot invariants parametrized by $\calE(\kappa)$.  

\subsection{$Z_n$ and the generating function of counts of Z-graphs}\label{ss:moduli_hori}

Let $f_1,f_2,\ldots,f_{3n}$ and $\xi_1,\xi_2,\ldots,\xi_{3n}$ be as in \S\ref{ss:def_Z}. Let $K$ be a nullhomologous knot in $M$.
\begin{Def}
Let $\Sigma=\kappa^{-1}(0)$. Suppose that no $1/1$-intersection curves for $\xi_i$ intersects $\Sigma$. For a labeled Jacobi diagram $\Gamma$ of degree $n$ and for $\vec{k}=(k_1,k_2,\ldots,k_{3n})\in\Z^{3n}$, we define $\acalM_{\Gamma(\vec{k})}(\Sigma;\xi_1,\xi_2,\ldots,\xi_{3n})$ as the set of piecewise smooth maps $I:\Gamma\to M$ such that
\begin{enumerate}
\item if $i\in E^{nW}(\Gamma)$, the restriction of $I$ to the $i$-th edge is a Z-path of $\xi_i$, 
\item if $i\in E^W(\Gamma)$, the restriction of $I$ to the $i$-th edge is embedded to a segment in $K$ in an orientation preserving way,
\item the algebraic intersection number of the restriction of $I$ to the $i$-th edge $e_i$ with $\Sigma$ is $k_i$.
\end{enumerate}
We call such maps {\it Z-graphs} for $(\Sigma;\xi_1,\xi_2,\ldots,\xi_{3n})$ of type $\vec{k}$. We define a topology on $\acalM_{\Gamma(\vec{k})}(\Sigma;\xi_1,\xi_2,\ldots,\xi_{3n})$ as the transversal intersection of smooth submanifolds of $\bConf_\Gamma(M)$, as in \S\ref{ss:count}. (See \cite[Lemma~3.7]{Wa3} for the reason of transversality of $\acalM_{\Gamma(\vec{k})}(\Sigma;\xi_1,\xi_2,\ldots,\xi_{3n})$ for generic choice of $(\kappa_i,f_i)$.)
\end{Def}
\begin{figure}
\fig{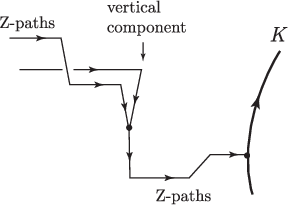}
\caption{A Z-graph for a knot}\label{fig:Z-graph}
\end{figure}
We may identify a point of $\acalM_{\Gamma(\vec{k})}(\Sigma;\xi_1,\ldots,\xi_{3n})$ with an oriented 0-manifold in $\bConf_{\Gamma}(M,K)$. Hence the moduli space $\acalM_{\Gamma(\vec{k})}(\Sigma;\xi_1,\ldots,\xi_{3n})$ can be counted with signs. The sum of signs agrees with the sum of coefficients of the terms of $t_1^{k_1}t_2^{k_2}\cdots t_{3n}^{k_{3n}}$ in the power series expansion of $\langle Q^\circ(\xi_{i_1}),Q^\circ(\xi_{i_2}),\ldots,Q^\circ(\xi_{i_{3n-q}})\rangle_\Gamma$. We denote the sum of signs by $\#\acalM_{\Gamma(\vec{k})}(\Sigma;\xi_1,\ldots,\xi_{3n})$. 

For generic choices of $\Sigma,\kappa_1,\ldots,\kappa_{3n},\xi_1,\ldots,\xi_{3n}$, a Z-graph $I\in \acalM_{\Gamma(\vec{k})}(\Sigma;\xi_1,\ldots,\xi_{3n})$ consists of finitely many vertical components, namely subgraphs consisting of vertical segments, and some Z-paths that connect two univalent vertices of vertical components or one univalent vertex of a vertical component and the knot $K$ (see Figure~\ref{fig:Z-graph}). If a Z-path that starts at (resp. end at) a point of $K$ has a horizontal segment, then its first (resp. last) vertical segment $\sigma$ is the transversal intersection of $K$ and the ascending (resp. descending) manifold locus of a horizontal segment next to $\sigma$ (resp. previous to $\sigma$). The following proposition follows from Theorem~\ref{thm:propagator} and from definition of $Z_n$. 

\begin{Prop}\label{prop:F} For generic choices of $\xi_1,\xi_2,\ldots,\xi_{3n}$ and for a labeled trivalent graph $\Gamma$, let $F_\Gamma(\Sigma;\xi_1,\xi_2,\ldots,\xi_{3n})$ be the generating function
\[ \sum_{\vec{k}=(k_1,\ldots,k_{3n})\in\Z^{3n}}\#\acalM_{\Gamma(\vec{k})}(\Sigma;\xi_1,\xi_2,\ldots,\xi_{3n})\,t_1^{k_1}t_2^{k_2}\cdots t_{3n}^{k_{3n}}, \]
where $\#\acalM_{\Gamma(\vec{k})}(\Sigma;\xi_1,\xi_2,\ldots,\xi_{3n})$ is the count of Z-graphs of type $\vec{k}$ of generic type. Then there exists an element $P(t_1,\ldots,t_{3n})\in \Q[t_1^{\pm 1},\ldots,t_{3n}^{\pm 1}]$ such that
\[ F_\Gamma(\Sigma;\xi_1,\xi_2,\ldots,\xi_{3n})
=P(t_1,\ldots,t_{3n})
\prod_{k=1}^{3n-q}C(t_{i_k})^{-1}\det(1-t_{i_k}\varphi_{*1})^{-1}\]
for a product $C(t)\in\Lambda$ of cyclotomic polynomials. Considering this as an element of $\Q(t_1)\otimes_\Q\Q(t_2)\otimes_\Q\cdots\otimes_\Q\Q(t_{3n})$, we have
\[ Z_n(K) =\frac{1}{2^n(2n)!(3n)!}\sum_\Gamma \Tr_\Gamma F_\Gamma(\Sigma;\xi_1,\xi_2,\ldots,\xi_{3n}). \]
\end{Prop}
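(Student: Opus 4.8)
The statement of Proposition~\ref{prop:F} has two parts: the rationality/factorization of the generating function $F_\Gamma$, and the identification of $Z_n(K)$ with $\tfrac{1}{(2n)!(3n)!}\sum_\Gamma \Tr_\Gamma F_\Gamma$. I would treat these in that order, because the second part is essentially a bookkeeping translation once the first is set up. The plan for the rationality is to recognize that the count $\#\acalM_{\Gamma(\vec{k})}$ is, by the definition of the intersection form in \S\ref{ss:count} together with \S\ref{ss:moduli_hori}, exactly the coefficient of $t_1^{k_1}\cdots t_{3n}^{k_{3n}}$ in the power series expansion of $\varpi_O\langle Q^\circ(\xi_{i_1}),\ldots,Q^\circ(\xi_{i_{3n-q}})\rangle_\Gamma$. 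So $F_\Gamma$ \emph{is} that element of $\Q(t_1)\otimes\cdots\otimes\Q(t_{3n})$, and the task reduces to controlling the denominators of each factor $Q^\circ(\xi_{i})$ individually.

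The key input is Theorem~\ref{thm:propagator}, which asserts that for a non-self-loop edge the propagator $Q(\xi_i)$ becomes a $\Lambda$-chain after multiplication by $(1-t)^2\Delta(t)$; hence each such factor contributes a denominator of exactly $(1-t_{i_k})^2\Delta(t_{i_k})$, accounting for the product $\prod_{k=1}^{3n-q}\{(1-t_{i_k})^2\Delta(t_{i_k})\}^{-1}$ over the $3n-q$ edges of $E^{nW}(\Gamma)$. For the self-loop edges (indexed in $E^\rho(\Gamma)$) the relevant object is $Q'(\xi_i)=\sum_\gamma \gamma$, whose contribution $\mu_{\lambda}^{(k)}(t)=\alpha\, t^{\sigma\cdot\Sigma}$ is already a Laurent monomial, so no denominator arises from those factors. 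First I would fix a labeling and write $\varpi_O\langle Q^\circ(\xi_{i_1}),\ldots\rangle_\Gamma$ using the explicit multilinear extension to $\widehat{\Lambda}$-chains from \S\ref{ss:count}: choosing $p_{k}=(1-t_{i_k})^2\Delta(t_{i_k})$ to clear each denominator, the bracket $\langle p_1 Q^\circ,\ldots\rangle_\Gamma$ lands in $C_0(\bConf_\Gamma(M,K))\otimes_{\Lambda_\Gamma}\Lambda_\Gamma$, i.e.\ in $\Lambda_\Gamma$. Transferring the $t_{i_k}$-variables to a single formal variable is unnecessary at this stage; the point is only that the numerator of each matrix entry is an honest Laurent polynomial. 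Setting $P_i^{(\mu)}(t_i)$ to be the monomials appearing in this cleared numerator, expanded as a finite $\Q$-linear combination $\sum_{\mu=1}^N\prod_i P_i^{(\mu)}(t_i)$ after separating variables, gives the claimed form.

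The final identity $Z_n(K)=\tfrac{1}{(2n)!(3n)!}\sum_\Gamma \Tr_\Gamma F_\Gamma$ then follows by unwinding the definitions: $Z_n(K)=\tfrac{1}{(2n)!(3n)!}\sum_\Gamma I_\Gamma(K)$ by Theorem~\ref{thm:invariance}, and $I_\Gamma(K)=\Tr_\Gamma\varpi_O\langle Q^\circ(\xi_{i_1}),\ldots\rangle_\Gamma$ by the definition in \S\ref{ss:def_Z}. Since $F_\Gamma$ was identified above with $\varpi_O\langle Q^\circ(\xi_{i_1}),\ldots\rangle_\Gamma$ viewed in $\Q(t_1)\otimes\cdots\otimes\Q(t_{3n})$, applying $\Tr_\Gamma$ (which, by its formula in \S\ref{ss:count}, simply substitutes a common variable $t$ into each tensor factor and records the resulting $\widehat{\Lambda}$-colored diagram) gives the stated equality. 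I expect the main obstacle to be the first part: verifying cleanly that the signed count of the zero-dimensional moduli space really equals the power-series coefficient of the intersection bracket, since this requires matching the coorientation/sign conventions of \S\ref{ss:count} with the orientations of AL-paths and the intersection-number exponents $t^{\sigma\cdot\Sigma}$ of \S\ref{ss:moduli_hori}, and checking that the infinite sum over closed AL-paths defining $Q'(\xi_i)$ converges to a well-defined $\widehat{\Lambda}$-chain whose denominators are controlled exactly as claimed. The rationality itself is then a direct consequence of Theorem~\ref{thm:propagator}, so the conceptual weight sits in this identification rather than in any new estimate.
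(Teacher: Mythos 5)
Your proposal reconstructs exactly the paper's (very terse) argument: the paper's entire proof consists of the identification, stated just before the proposition, of $\#\acalM_{\Gamma(\vec{k})}(\Sigma;\xi_1,\ldots,\xi_{3n})$ with the coefficient of $t_1^{k_1}\cdots t_{3n}^{k_{3n}}$ in the power series expansion of $\varpi_O\langle Q^\circ(\xi_{i_1}),\ldots,Q^\circ(\xi_{i_{3n-q}})\rangle_\Gamma$, followed by the remark that the proposition ``follows from Theorem~\ref{thm:propagator} and from definition of $Z_n$''---which is precisely your three steps. One imprecision worth correcting: your claim that self-loop factors produce no denominators confuses the individual coefficients $\alpha\, t^{\sigma\cdot\Sigma}$ with the chain $Q'(\xi_i)$ itself, which is an \emph{infinite} sum over closed AL-paths and is only rational, not polynomial (grouping the iterates of each irreducible closed AL-path yields geometric series, whose denominators are controlled by $(1-t)^2\Delta(t)$ just as for $Q(\xi_i)$, via the boundary formula in Theorem~\ref{thm:propagator}); since the stated formula permits denominators on all $3n-q$ non-Wilson edges, and since you flag exactly this verification in your final paragraph, this does not affect the conclusion.
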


\mysection{Invariance of $Z_n$ (proof of Theorem~\ref{thm:invariance})}{s:***}

\subsection{1-parameter families of configuration spaces}

Let $I=[0,1]$. Let $K_0,K_1:S^1\to M$ be two nullhomologous knots that are isotopic. Let $\widetilde{K}:S^1\times I\to M$ be an isotopy between $K_0$ and $K_1$ and for $s\in I$ let $K_s:S^1\to M$ be the embedding $K_s(x)=\widetilde{K}(x,s)$. Let $\bConf_{t,q}(M\times I,\widetilde{K})$ be the $\bConf_{t,q}(M,K_0)$-bundle over $I$ given by the pullback in the following commutative diagram
\[ \xymatrix{
  \bConf_{t,q}(M\times I,\widetilde{K}) \ar[r] \ar[d] & \bConf_{t+q}(M) \ar[d]\\
  \bConf_q(S^1)_<\times I \ar[r]_-{C_q^{\widetilde{K}}} & \bConf_q(M)
}\]
where $C_q^{\widetilde{K}}$ is the natural map induced by $\widetilde{K}$. Let $\Gamma$ be a labeled Jacobi diagram with $t$ trivalent and $q$ univalent vertices. Let $\bConf_\Gamma(M\times I,\widetilde{K})$ be the pullback in the following commutative diagram
\[ \xymatrix{
  \bConf_\Gamma(M\times I,\widetilde{K}) \ar[r] \ar[d] & \bConf_\Gamma(M) \ar[d]\\
  \bConf_q(S^1)_<\times I \ar[r]_-{C_O^{\widetilde{K}}} & \bConf_O(M)
}\]
where $C_O^{\widetilde{K}}$ is the natural map induced by $\widetilde{K}$. Then $\bConf_\Gamma(M\times I,\widetilde{K})$ forms a fiber bundle over $I$ with fiber diffeomorphic to $\bConf_\Gamma(M,K_0)$.

\subsection{Bifurcations of Z-graphs in 1-parameter family, $\xi_i$ fixed}

By replacing $Q^\circ(\xi_i)$ with the chain $\widetilde{Q}^\circ(\xi_i)=Q^\circ(\xi_i)\times I$ of $\bConf_{K_2}(M)\times I$ in the definition of $I_\Gamma(K)$, we may define 
\[ I_\Gamma(\widetilde{K})=\Tr_\Gamma\langle \widetilde{Q}^\circ(\xi_{i_1}),\ldots, \widetilde{Q}^\circ(\xi_{i_{3n-q}})\rangle_\Gamma, \]
where $\langle \widetilde{Q}^\circ(\xi_{i_1}),\ldots, \widetilde{Q}^\circ(\xi_{i_{3n-q}})\rangle_\Gamma$ is the equivariant intersection in $\bConf_\Gamma(M\times I,\widetilde{K})$ defined by fixing $\Sigma$. If $\widetilde{K}$ is generic, this gives a piecewise smooth 1-chain in $C_1(\bConf_{t,q}(M\times I,\widetilde{K});\Q)\otimes_\Q\calA_n(S^1;\widehat{\Lambda})$. A bordism change of $\Sigma$ may change the value of $\langle Q^\circ(\xi_{i_1}),\ldots, Q^\circ(\xi_{i_{3n-q}})\rangle_\Gamma$. However, one may see that its trace is invariant under a bordism of $\Sigma$, by exactly the same argument as \cite[Lemma~4.1]{Wa3}. We have
\[ Z_n(K_0)-Z_n(K_1)=\pm \frac{1}{2^n(2n)!(3n)!}\sum_\Gamma \partial I_\Gamma(\widetilde{K}). \]

It follows from the formula of $\partial Q(\wxi)$ in Theorem~\ref{thm:propagator} that we may arrange that the possible contributions in the boundary of $I_\Gamma(\widetilde{K})$ in the 1-parameter family are of the following forms:
\begin{enumerate}
\item Z-graph with an edge in $\Gamma$ collapsed to a point.
\item Z-graph with a {\it nonseparated full} subgraph $\Gamma'$ of $\Gamma$ with at least 3 vertices collapsed to a point, where we say that $\Gamma'$ is {\it full} if every edge $(i,j)\in E^{nW}(\Gamma)$ with $i,j\in V(\Gamma')$ belongs to $E^{nW}(\Gamma')$ and if the vertices in $V^W(\Gamma')$ are successive in $\Gamma$, and we say that $\Gamma'$ is {\it nonseparated} if $\Gamma$ is not of the form $\mbox{Closure}(\Gamma')\# \Gamma''$ for any Jacobi diagram $\Gamma''$, which may be empty.
\item (Anomalous face) Z-graph with a separated component in $P(\Gamma)$ collapsed to a point on a knot, where we say that a component $\Gamma_0$ in $P(\Gamma)$ is separated if $\Gamma$ is of the form $\Gamma'\# \Gamma''$ or $\Gamma'$ for Jacobi diagram(s) $\Gamma'$ and $\Gamma''$ such that $\Gamma'$ is nullhomotopic\footnote{If one of two Jacobi diagrams is nullhomotopic, then the connected sum of the two is well-defined (i.e., independent of the choice of arcs for the connected sum).} and $P(\Gamma')=\{\Gamma_0\}$.
\end{enumerate}
These correspond to the intersection of $I_\Gamma(\widetilde{K})$ with the boundary strata of $\bConf_{t,q}(M\times I,\widetilde{K})$ that are not over $\{0,1\}$. Note that it is not necessary to consider a Z-graph with a non self-loop edge forming a closed Z-path in $M$ since such a Z-graph and a Z-graph with one 4-valent vertex do not occur simultaneously in a generic 1-parameter family. 

\begin{Lem}\label{lem:inv_1_2}
For $n\geq 1$, $Z_n$ is invariant under bifurcations (1) and (2).
\end{Lem}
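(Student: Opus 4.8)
The plan is to work in the identity already displayed, $Z_n(K_0)-Z_n(K_1)=\pm\frac{1}{(2n)!(3n)!}\sum_\Gamma \partial I_\Gamma(\widetilde K)$, and to show that the parts of $\sum_\Gamma\partial I_\Gamma(\widetilde K)$ coming from faces of types (1) and (2) vanish in $\calA_n(S^1;\widehat\Lambda)$. The cancellation is not geometric: the signed counts of the boundary AL-graphs are not individually zero. Rather, it takes place in the quotient $\calA_n$, where the relations AS, IHX, STU and FI hold. The point is that the normalization $\frac{1}{(2n)!(3n)!}$, together with the sum over all vertex/edge labelings and all edge-orientations, groups the boundary contributions into complete labeling orbits, so that each orbit realizes exactly one of these relators.

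First I treat type (1), the principal faces, where two adjacent vertices collide and the joining edge $e$ collapses, producing an AL-graph with a single $4$-valent point $w$. There are two sub-cases. If $e\in E^I(\Gamma)$ joins two trivalent vertices, then the limiting AL-graph (one interior $4$-valent vertex) is the common boundary face of exactly the three labeled diagrams obtained by the three ways of splitting $w$ into two trivalent vertices joined by an edge. A coorientation computation, using the sign defined via $\twedge_{e}\psi_e^*\,o^*_{\bConf_{K_2}(M)}(F_e)$ and the AS relation for the orientation of the merged vertex, identifies their signed sum (as in \cite{Wa3}) with the image under $\Tr$ of the IHX relator, hence it is zero in $\calA_n$. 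If instead the collapsing edge meets the Wilson loop---either an interior vertex sliding onto $K$, or two consecutive univalent vertices colliding on $K$---then the diagrams involved assemble into the image of the STU relator (and its consequence, the $4$T relation), which again vanishes. In each case the essential content is that one geometric face is shared, with matching coorientations, by precisely the diagrams occurring in a single relator.

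Next I treat type (2), the hidden faces, where a full, nonseparated subgraph $\Gamma'$ with $a=|V(\Gamma')|\geq 3$ collapses to a single point $p$ (interior to $M$, or on $K$). Using the product structure of the corresponding codimension-$1$ face of the Fulton--MacPherson--Kontsevich compactification $\bConf_\Gamma(M,K)$, the contribution factors as the count for the quotient graph obtained by contracting $\Gamma'$ to a single vertex at $p$, times an \emph{infinitesimal} count for $\Gamma'$ in the tangent space $T_pM\cong\R^3$ (or the half-space model when $p\in K$). By Theorem~\ref{thm:propagator}, the behaviour of each propagator $Q(\xi_i)$ near the diagonal is governed by its boundary term $s_{\xi_i}^*(M)$, so the infinitesimal count for $\Gamma'$ is the analogue, for the explicit AL-propagator, of Kontsevich's integral over the configuration space of $a$ points modulo translation and scaling. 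The analogue of Kontsevich's vanishing lemma in this AL-setting---established in \cite{Wa3}---then applies: for $a\geq 3$ one exhibits an internal vertex of $\Gamma'$ whose position can be integrated out first, or an orientation-reversing involution of the reduced configuration space, forcing the infinitesimal count to be $0$. Hence every type (2) face contributes $0$.

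The main obstacle is the type (2) vanishing. The IHX/STU cancellation in type (1) is essentially formal once the coorientations are matched. For type (2) one must verify that the local model at a collapse of $\geq 3$ vertices is genuinely controlled by the leading diagonal term $s_{\xi_i}^*(M)$ of the propagator, and that the resulting infinitesimal count carries the symmetry needed for Kontsevich-type vanishing; this requires care because the edges are colored by distinct generic gradients $\xi_i$ and because the AL-paths are piecewise horizontal/vertical rather than smooth. Accordingly I would isolate this as a separate infinitesimal-count lemma, proved exactly as the corresponding step in \cite{Wa3}, and then feed it into the face analysis above. Note that neither argument uses $n\leq 2$, so the lemma holds for all $n\geq 0$; the degree restriction in the main theorem enters only through the anomalous faces of type (3), treated separately.
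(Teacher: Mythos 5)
Your proposal is correct and follows essentially the same route as the paper's proof: type (1) faces are cancelled by grouping them into IHX/STU relators with matched coorientations (details as in \cite{BT, AF}), and type (2) faces vanish via the analogue of Kontsevich's vanishing lemma for the AL-propagator established in \cite{Wa3}, with the orientation issues handled locally as in the $S^3$ case. The one detail the paper makes explicit that you leave implicit is that the contribution of a collapsed edge labeled $k$ is a rational function with no nonzero powers of $t_k$, which is what ensures the boundary terms genuinely assemble into instances of the (trivially colored) IHX/STU relations in $\calA_n(S^1;\widehat{\Lambda})$.
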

\begin{proof}
The invariance under a bifurcation of type (1) follows by the IHX and STU relations. Roughly, the Z-graph with an edge labeled $k$ collapsed to a point has one 4-valent vertex whose contribution in $F_\Gamma(\Sigma;\xi_1,\ldots,\xi_{3n})$ is a rational function with no terms of nonzero exponents of $t_k$. The sum of contributions of such Z-graphs come from terms in the IHX or the STU relations and they are set to be zero by the relations. For the detail, see \cite[Theorem~1]{AF}, where the boundary contribution is given by integrations in place of the counts. 

The invariance under a bifurcation of type (2) follows by an analogue of Kontsevich's lemma (\cite{Ko1}) as in \cite{Wa3} and by dimensional reasons. See the proof of \cite[Lemma~4.5]{Wa3} for the detail. Here, we must take care of the orientations of the faces of the moduli spaces at the boundary of $\bConf_\Gamma(M\times I,\widetilde{K})$. However, the problem is local and the orientations of the faces can be treated in almost the same way as the case of knots in $S^3$ given in \cite{BT, AF}. 
\end{proof}

Next, we shall consider the invariance under a bifurcation of type (3). Let $s_0\in I$ be a parameter at which a bifurcation of type (3) occurs and put $J=[s_0-\ve,s_0+\ve]$ for $\ve>0$ small. For a generic 1-parameter family $\widetilde{K}$, we may assume that there are finitely many such bifurcation time $s_0$ in $I$. Moreover, we may assume that only one bifurcation occurs in $J$. The change of the value of $Z_n$ for the bifurcation is given by sum of contributions of the anomalous face $\calS$ of $\partial\bConf_\Gamma(M\times J,\widetilde{K})$, which can be described as follows, following \cite{BT, BC}. 

Let $\Gamma$ be a labeled Jacobi diagram on $S^1$ with $q$ univalent and $t$ trivalent vertices. We equip $\Gamma$ with a $\Lambda$-coloring that is nullhomotopic. Suppose for simplicity that the univalent vertices of $\Gamma$ are labeled by $1,2,\ldots,q$ and they are cyclically ordered in this order. Let $P\to M$ be the orthonormal frame bundle associated to $TM$. Then the unit tangent bundle $S(TM)$ is identified with $P\times_{SO(3)}S^2$. Let $\bar\beta:S(\beta^*TM)\to S(TM)$ be the natural bundle map that covers $\beta=\widetilde{K}:S^1\times J\to M$. Let $B_{t,q}$ be the space of points $(a,u_1,\ldots,u_q,v_1,\ldots,v_t)$ in $S^2\times (\R)^q\times (\R^3)^t$ such that
\begin{itemize}
\item $u_1<\cdots<u_q$, or its cyclic permutations,
\item $v_i\neq v_j$ if $i\neq j$,
\item $u_ia\neq v_j$ for all $i\in\{1,\ldots,q\},j\in\{1,\ldots,t\}$,
\item $\displaystyle\sum_{i=1}^q u_i^2+\sum_{j=1}^t \|v_j\|^2=1$ and $\displaystyle\sum_{i=1}^q u_i+\sum_{j=1}^t \langle v_j,a\rangle =0$.
\end{itemize}
The forgetful map $B_{t,q}\to S^2$ is a fiber bundle. Let $\nu:S^1\times J\to S(\beta^*TM)$ be the section given by the unit tangent vectors of knots $\widetilde{K}|_{S^1\times\{s\}}:S^1\times\{s\}\to M$ for each $s\in J$ and let $\calS$ be the pullback in the following commutative diagram. 
\[ \xymatrix{
  \calS \ar[rr]^-{E(\bar\beta\circ{\nu})} \ar[d] && P\times_{SO(3)}B_{t,q} \ar[d]\\
  S^1\times J \ar[rr]_-{\bar\beta\circ{\nu}} && S(TM) 
}\]
Then $\calS$ can be naturally identified with the interior of the anomalous face of $\partial\bConf_{t,q}(M\times J,\widetilde{K})$. 

Let $\pi_{ij}:P\times_{SO(3)}B_{t,q}\to S(TM)$ denote the fiberwise Gauss map, namely, it gives the direction of the straight line that connects the $i$-th and the $j$-th vertex in $T_xM$. Let
\[ \theta_\ell=E(\bar\beta\circ{\nu})^{-1}\pi_{ij}^{-1}(s_{\xi_\ell}^*(M)), \]
where the edge of $\Gamma$ labeled $\ell$ is $(i,j)$. This is a piecewise smooth submanifold of $\calS$ and has a natural coorientation induced from that of $s_{\xi_\ell}^*(M)$ in $S(TM)$. Let $\Gamma_0$ be a separated component in $P(\Gamma)$, let $k$ be the number of edges of $\Gamma_0$ in $E^{nW}(\Gamma)$ and let $\langle \theta_1,\ldots,\theta_k\rangle_{\Gamma_0}$ be the count of $\bigcap_{\ell=1}^k \theta_\ell$ with signs and put
\[ I_{\Gamma_0}(\widetilde{K})=\langle \theta_1,\ldots,\theta_k\rangle_{\Gamma_0}[\underline{\Gamma_0}] \in \calA_{n_0}(S^1;\Q), \]
where $\underline{\Gamma_0}$ is the union of $\Gamma_0$ and the Wilson loop, and $n_0=\mathrm{deg}\,\underline{\Gamma_0}$. If $\ve$ is small enough, then the interior boundary of $I_\Gamma(\widetilde{K})$ for the bifurcation of type (3) over $\mathrm{Int}\,J$ is of the form $I_{\Gamma_0}(\widetilde{K})\# I_{\Gamma_1}(K_{s_0-\ve})$ for $\Gamma_1$ such that $\Gamma=\underline{\Gamma_0}\#\Gamma_1$. 

\begin{Lem}\label{lem:anomaly}
For $n=1,2$, $Z_n$ is invariant under bifurcation (3).
\end{Lem}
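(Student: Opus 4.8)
The plan is to show that the anomalous-face contribution to the boundary, which was identified above with $I_{\Gamma_0}(\widetilde{K})\# I_{\Gamma_1}(K_{s_0-\ve})$, already vanishes on the factor $I_{\Gamma_0}(\widetilde{K})=\langle\theta_1,\ldots,\theta_k\rangle_{\Gamma_0}[\underline{\Gamma_0}]\in\calA_{n_0}(S^1;\Q)$, so that bifurcation (3) produces no net change in $Z_n$. Since $\Gamma_0$ is a nonempty nullhomotopic separated component of a diagram of degree $n\le 2$, its degree $n_0=\deg\underline{\Gamma_0}$ lies in $\{1,2\}$; and because $\Gamma_0$ is nullhomotopic the class $[\underline{\Gamma_0}]$ lives in the uncolored space $\calA_{n_0}(S^1;\Q)$. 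It therefore suffices to verify $I_{\Gamma_0}(\widetilde{K})=0$, which I treat according to the value of $n_0$.

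The case $n_0=1$ is immediate: the only degree-$1$ Jacobi diagram on $S^1$ is a single chord, and for a nullhomotopic $\Gamma_0$ this chord is isolated, hence zero by the FI relation. Thus $\calA_1(S^1;\Q)=0$ and $I_{\Gamma_0}(\widetilde{K})=0$. This disposes of $n=1$ entirely and of the $n_0=1$ terms occurring for $n=2$.

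For $n_0=2$ there are exactly two connected components carrying univalent vertices: the \emph{bubble} (two trivalent vertices joined by a double edge) and the \emph{claw} (one trivalent vertex joined to three points of the Wilson loop). The bubble is zero in $\calA_2(S^1;\Q)$ by AS, since interchanging its two parallel edges multiplies the class by $-1$; so the only surviving term is $\langle\theta_1,\theta_2,\theta_3\rangle_{\mathrm{claw}}[\underline{\mathrm{claw}}]$, and as $[\underline{\mathrm{claw}}]\ne 0$ I must prove that this coefficient vanishes. I would first observe that, because the anomalous face is a local phenomenon determined by the germ of $s_{\xi_\ell}^*(M)$ along the diagonal of $S(TM)$, and because any two such propagators are homologous there, the coefficient is a universal constant independent of $K$, of the isotopy, and of the chosen propagator. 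Hence it equals the degree-$2$ anomaly computed with the antipodally symmetric round propagator, for which the point-reflection involution $(u_i,v_j)\mapsto(-u_i,-v_j)$ of the fiber $B_{1,3}$ genuinely acts on $\bigcap_\ell\theta_\ell$. The standard sign analysis of \cite{BT,BC} then shows that for even degree this involution is orientation-reversing, so the coefficient equals its own negative and hence vanishes; this is the precise analogue of the vanishing of the configuration-space anomaly in even degrees.

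The step I expect to be the main obstacle is exactly this last vanishing for the claw, precisely because the AL-propagator $s_{\xi_\ell}^*(M)$ is \emph{not} antipodally symmetric, so the reflection involution does not literally act on $\bigcap_\ell\theta_\ell$ in the equivariant model. The plan is to neutralize this by the universality/homology argument above, reducing to the symmetric propagator where the orientation bookkeeping is local and identical to \cite{BT,BC} and to the corresponding step of \cite{Wa3}; one must still check carefully that replacing $s_{\xi_\ell}^*(M)$ by the round propagator changes the count only through higher-codimension faces and hence not at all for this $0$-dimensional anomaly. Everything else—the factorization of the anomalous face through $\calS$ and the reduction to the vanishing of $I_{\Gamma_0}(\widetilde{K})$—is formal and already supplied by the construction preceding the statement.
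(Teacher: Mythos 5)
Your degree-one case (FI relation) is exactly the paper's argument, and your reduction of the problem to the anomalous-face coefficients is consistent with the setup preceding the lemma. But the degree-two case contains two genuine gaps. The first is concrete: the bubble is \emph{not} zero in $\calA_2(S^1;\Q)$, and the AS argument you give for it is incorrect. Interchanging the two parallel edges reverses the cyclic order at \emph{both} trivalent vertices, so the class is multiplied by $(-1)^2=+1$, not $-1$. Indeed, applying STU at one trivalent vertex of the bubble and using AS gives $(\mbox{bubble})=\pm 2\,(\mbox{claw})$, and a further STU together with FI identifies this with $\pm 2$ times the crossed-chord diagram, which generates $\calA_2(S^1;\Q)$. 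So the bubble's anomalous contribution cannot be discarded; it must be cancelled by the same mechanism you invoke for the claw.

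The second gap is the one you yourself flag as the main obstacle, and the proposed patch does not close it. Your plan rests on the claim that the anomaly coefficient is independent of the propagator because ``any two such propagators are homologous'' near the diagonal, which would allow replacement of $s_{\xi_\ell}^*(M)$ by an antipodally symmetric one. This is unjustified on several counts: (a) the chains $\theta_\ell$ sit over the \emph{noncompact} fibers of $B_{t,q}$, so replacing a propagator by a homologous chain can change a $0$-dimensional count through contributions escaping into deeper boundary strata --- controlling exactly this is the hard part, not a routine check; (b) two closures of unit vector fields need not even be homologous in $S(TM)$, since their difference can be a nonzero class in the $H_1(M)\otimes H_2(S^2)$ summand of $H_3(S(TM))$; and (c) your initial reduction to the vanishing of each individual $I_{\Gamma_0}(\widetilde{K})$ is already too strong: with the asymmetric AL-propagator there is no reason a single labeled, edge-oriented graph's count vanishes. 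The paper's proof sidesteps all of this combinatorially. The involution of the infinitesimal configurations in $B_{t,q}$ intertwines the fiberwise Gauss map $\pi_{ij}$ with the fiberwise antipodal map of $S(TM)$, hence carries the anomalous intersection locus of a labeled graph with edge orientations $o$ to that of the \emph{same} underlying graph with all edge orientations reversed; since $Z_n$ is defined as a sum over all labelings and all edge orientations, these contributions cancel in pairs when $\deg\,\underline{\Gamma_0}$ is even. This is what the paper means by ``a cancellation between two graphs with different labellings'' as opposed to the symmetry of a single unlabeled graph in \cite{BT}: no symmetric propagator, and no universality statement, is ever needed. To repair your proof you would either have to prove the universality claim (which essentially amounts to redoing this analysis) or switch to the pairing argument.
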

\begin{proof}
  By the same reason as \cite[Theorem~1.6]{BT}, the sum of the interior boundary of $I_\Gamma(\widetilde{K})$ for all labeled Jacobi diagram $\Gamma$ and for all possible edge orientations vanishes if the degree of $\underline{\Gamma_0}$ is even. Note that in \cite{BT}, a cancellation by symmetry of a (unlabeled) graph is considered, whereas we consider a cancellation between two graphs with different labellings. For example, let $\Gamma_0$ (resp. $\Gamma_0'$) be a labeled Jacobi diagram as in the left side (resp. right side) of Figure~\ref{fig:Y-symmetry}. Note that the graph-orientation for $\Gamma_0$ and $\Gamma_0'$ are equivalent because $\Gamma_0'$ is obtained from $\Gamma_0$ by a swap of vertices labeled 2 and 4 and by reversing all the three edge-orientations. We denote by $B_{1,3}(a)$ the fiber of the bundle $B_{1,3}\to S^2$ over $a\in S^2$. Let $x=(u_2,u_3,u_4,v_1)\in B_{1,3}(a)$ be a configuration for the graph $\Gamma_0$ and let $V\in\bigwedge^4 T^*_xB_{1,3}(a)$ be a local volume form on the fiber $B_{1,3}(a)$ which gives the orientation of the fiber. Suppose for simplicity that $x$ is represented by coordinate with $u_2=0$. We consider the automorphism $s:B_{1,3}(a)\to B_{1,3}(a)$ defined by $s(u_i)=-u_i$ ($i=3,4$), $s(v_1)=-v_1$. Then $s$ maps $x$ to a configuration $y=(0,u_3',u_4',v_1')=(0,-u_3,-u_4,-v_1)\in B_{1,3}(a)$ for $\Gamma_0'$. Let $V_y\in\bigwedge^4 T^*_yB_{1,3}(a)$ be a local volume form that is compatible with $V_x$. Then $s$ reverses the orientation of $B_{1,3}(a)$. Let $\theta_\ell^*$ denote the coorientation of $\pi_{ij}^{-1}(s_\xi^*(M))$ in $B_{1,3}$. We have
\[ \begin{split}
  &\langle (\theta_1^*)_{(v_1,0)}\wedge (\theta_2^*)_{(v_1,au_3)}\wedge (\theta_3^*)_{(v_1,au_4)}, V_x\rangle\\
  &=\langle s^*((\theta_1^*)_{(0,v_1')}\wedge (\theta_2^*)_{(au_3',v_1')}\wedge (\theta_3^*)_{(au_4',v_1')}), V_x\rangle\\
  &=\langle (\theta_1^*)_{(0,v_1')}\wedge (\theta_2^*)_{(au_3',v_1')}\wedge (\theta_3^*)_{(au_4',v_1')}, ds_*V_x\rangle\\
  &=-\langle (\theta_1^*)_{(0,v_1')}\wedge (\theta_2^*)_{(au_3',v_1')}\wedge (\theta_3^*)_{(au_4',v_1')}, V_y\rangle.
\end{split} \]
This implies that the sum of terms for all possible labellings and edge orientations vanishes. 

\begin{figure}
\fig{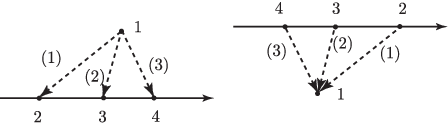}
\caption{}\label{fig:Y-symmetry}
\end{figure}

If the degree of $\underline{\Gamma_0}$ is 1, the contributions of the graphs that are relevant to the anomalous faces vanish by the FI relation. This completes the proof.
\end{proof}

\subsection{Bifurcations of Z-graphs in 1-parameter family, $\xi_i$ perturbed}

\begin{Lem}\label{lem:inv_concordance}
For $n=1,2$, $Z_n$ is invariant under a concordance of $\xi_i$.
\end{Lem}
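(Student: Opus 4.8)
The plan is to prove invariance of $Z_n$ under a concordance of the fiberwise gradient $\xi_i$ by setting up a 1-parameter family argument parallel to the one used for isotopy invariance, but now with the propagator chain itself varying. First I would fix all data except the $\xi_i$ and consider a concordance $\{\xi_i^s\}_{s\in[0,1]}$ between two choices of oriented fiberwise gradients, realized by a concordance of the underlying fiberwise GMFs in the sense of the definition in \S\ref{ss:FMF}. By Theorem~\ref{thm:propagator} applied fiberwise over the concordance parameter, the union $\bigcup_s Q(\xi_i^s)$ (together with the self-loop cycles $\bigcup_s Q'(\xi_i^s)$) assembles into a 5-dimensional $\widehat{\Lambda}$-chain $\widetilde{Q}^\circ(\xi_i)$ in the total space of the $\bConf_{K_2}(M)$-bundle over $[0,1]$. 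The concordance hypothesis—that birth-death loci project to non-nullhomotopic curves—is exactly what guarantees, as in \cite{Wa2,Wa3}, that this family chain stays equivariant and that $(1-t)^2\Delta(t)$ times it is a $\Lambda$-chain, so that the pairing $\langle\cdot,\ldots,\cdot\rangle_\Gamma$ remains defined throughout the family.

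Next I would form the family intersection $I_\Gamma(\widetilde{\xi})=\Tr_\Gamma\varpi_O\langle\widetilde{Q}^\circ(\xi_{i_1}),\ldots,\widetilde{Q}^\circ(\xi_{i_{3n-q}})\rangle_\Gamma$, which for generic concordance gives a piecewise smooth 1-chain in $C_1(\bConf_{t,q}(M)\times[0,1])\otimes_\Q\calA_n(S^1;\widehat{\Lambda})$, and compute its boundary by Stokes. The difference $Z_n(K;\xi^0)-Z_n(K;\xi^1)$ equals $\pm\frac{1}{(2n)!(3n)!}\sum_\Gamma\partial I_\Gamma(\widetilde{\xi})$ up to the endpoint contributions at $s=0,1$. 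The boundary terms coming from the intersection with $\partial\bConf_\Gamma$ are bifurcations of the same three types (1), (2), (3) already analyzed in Lemmas~\ref{lem:inv_1_2} and \ref{lem:anomaly}; those arguments are local in $\bConf_\Gamma$ and do not care whether the 1-parameter direction is an isotopy of $K$ or a concordance of $\xi_i$, so they apply verbatim and contribute zero for $n=1,2$. The genuinely new boundary terms are the interior contributions produced by $\partial\widetilde{Q}^\circ(\xi_i)$, i.e.\ the places where the family propagator chain has boundary coming not from the configuration-space faces but from the variation of $\xi_i^s$ itself.

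The main obstacle, then, is controlling these new interior terms, which arise from the birth-death events and the crossing/creation of closed AL-paths and $1/1$-intersections as $s$ varies. By the fiberwise version of Theorem~\ref{thm:propagator}, $\partial\widetilde{Q}^\circ(\xi_i)$ decomposes into the family section locus $\bigcup_s s_{\xi_i^s}^*(M)$ and the family closed-AL-path locus $\sum_\gamma(-1)^{\ind\gamma}\ve(\gamma)t^{p(\gamma)}S(T\gamma^\irr)$, plus codimension-1 strata where the combinatorial type of the propagator jumps at a birth-death parameter. I would argue that the contribution of $\bigcup_s s_{\xi_i^s}^*(M)$ is itself exact, being the family analogue of the section term whose trace is killed by the same bordism-invariance-of-$\Sigma$ argument invoked for isotopy invariance (cf.\ \cite[Lemma~4.1]{Wa3}); and that the birth-death strata cancel in pairs because a concordance requires birth-death curves to wind non-trivially around $S^1$, so each birth is matched by a death with opposite incidence number, giving a net contribution of zero. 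The delicate point I expect to have to check carefully is the orientation/sign bookkeeping for the closed-AL-path term $S(T\gamma^\irr)$ in the family: one must verify that as $s$ runs over the concordance the period $p(\gamma)$ and sign $\ve(\gamma)$ vary consistently so that the $t^{p(\gamma)}$-weighted contributions telescope, which reduces to showing that the relevant family moduli space of closed AL-paths has no genuine codimension-0 boundary after one accounts for the equivariant coefficients. I would handle this exactly as in \cite{Wa3}, reducing to the statement that $(1-t)^2\Delta(t)\widetilde{Q}^\circ(\xi_i)$ is a $\Lambda$-chain with controlled boundary, whence invariance follows for $n=1,2$.
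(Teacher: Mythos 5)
Your overall skeleton --- extend the propagators over the concordance parameter, form the 1-chain $I_\Gamma$, apply Stokes, and reduce to the same three bifurcation types --- coincides with the paper's. But the two steps that carry the genuinely new content of this lemma do not hold up as you state them. First, the family propagator $\widetilde{Q}^\circ(\widetilde{\xi}_i)$ is \emph{not} obtained by ``applying Theorem~\ref{thm:propagator} fiberwise'': at a birth-death parameter $s_0$ the field $\xi_i^{s_0}$ is not the gradient of a fiberwise Morse function, so the theorem does not apply to that slice at all, and near such parameters the slicewise chains $Q(\xi_i^s)$ jump (closed AL-paths and $1/1$-intersections are created and destroyed), so the naive union $\bigcup_s Q(\xi_i^s)$ need not be a $\widehat{\Lambda}$-chain with controlled boundary. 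The existence of the extension is exactly the nontrivial input that the paper imports from \cite[\S{4.3}--\S{4.6}]{Wa3}, and the concordance condition of \S\ref{ss:FMF} (each birth-death locus projects to a closed curve that is not nullhomotopic in $S^1\times[0,1]$) is the hypothesis that construction requires. Your substitute argument --- that ``each birth is matched by a death with opposite incidence number, so the birth-death strata cancel in pairs'' --- misreads that condition: it constrains the free homotopy class of each individual birth-death locus (it must wind around the $S^1$ direction of $\kappa$), and it does not pair births with deaths across the parameter $s$; no cancellation of the kind you describe is available or needed once the extension of \cite{Wa3} is invoked.

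Second, the anomalous face (type (3)) does not go through ``verbatim,'' because in Lemma~\ref{lem:anomaly} the chain $\theta_\ell$ is the Gauss-map preimage of the \emph{fixed} chain $s_{\xi_\ell}^*(M)$, whereas under a concordance this locus moves with the parameter and degenerates along birth-death loci. The bulk of the paper's proof is devoted precisely to this point: it constructs the 4-dimensional family chain $s_{\widetilde{\xi}_\ell}^*(M\times J)$ --- the closure of the normalized section together with the lower-hemisphere disk bundles $E^-_\sigma$ over all critical loci, birth-death loci included --- then defines the family classes $\widetilde{\theta}_\ell$ and the anomaly term $\widetilde{I}_{\Gamma_0}(\widetilde{K})$, and only after that invokes the vanishing mechanism of Lemma~\ref{lem:anomaly} (the parity cancellation of \cite{BT} when $\deg\,\underline{\Gamma_0}$ is even, and the FI relation in degree 1). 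Your alternative treatment of this term --- that $\bigcup_s s_{\xi_i^s}^*(M)$ is ``exact'' and its trace is killed by the bordism-invariance-of-$\Sigma$ argument of \cite[Lemma~4.1]{Wa3} --- conflates two unrelated things: bordism invariance in $\Sigma$ concerns the choice of the surface defining the equivariant coefficients $t^{(\cdot)\cdot\Sigma}$, not the contribution of the section locus to the anomalous face. As written, the proposal therefore leaves both new points of this lemma unproved.
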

\begin{proof}
Let $\widetilde{\xi}_i$ be the 1-parameter family for a concordance of $\xi_i$. By \cite[\S{4.3}--\S{4.6}]{Wa3}, the equivariant propagators of Z-paths extend over the 1-parameter family. Using the extensions, we may define the piecewise smooth 1-chain
\[ I_\Gamma(\widetilde{K})=\Tr_\Gamma\langle \widetilde{Q}^\circ(\widetilde{\xi}_{i_1}),\ldots, \widetilde{Q}^\circ(\widetilde{\xi}_{i_{3n-q}})\rangle_\Gamma \]
in $C_1(\bConf_{t,q}(M\times J,\widetilde{K});\Q)\otimes_\Q\calA_n(S^1;\widehat{\Lambda})$. For the rest of the proof, one may see by an argument similar to \cite{Wa3} that the boundaries of $I_\Gamma(\widetilde{K})$ that may contribute are of the same kinds as (1)--(3) listed above. Proof of the invariance under bifurcations of types (1) and (2) is the same as Lemma~\ref{lem:inv_1_2}. 

To prove (3), assume that $n\leq 2$. The anomalous face $\widetilde{\calS}$ over a closed interval $J\subset I$ is the pullback in the following commutative diagram.
\[ \xymatrix{
  \widetilde{\calS} \ar[rr]^-{E(\widetilde{\nu})} \ar[d] && (P\times_{SO(3)}B_{t,q})\times J \ar[d]\\
  S^1\times J \ar[rr]_-{\widetilde{\nu}} && S(TM)\times J
}\]
where $\widetilde{\nu}:S^1\times J\to S(TM)\times J$ is the section on $\widetilde{K}$ given by the unit tangent vectors of knots $\widetilde{K}|_{S^1\times\{s\}}:S^1\times\{s\}\to M\times\{s\}$. Let $(M\times J)_0=(M\times J)\setminus\bigcup_{\sigma\,:\,\mathrm{critical\, locus\, of\, \widetilde{\xi}_\ell}}\sigma$ and let $s_{\widetilde{\xi}_\ell}:(M\times J)_0\to S(TM)\times J$ be the normalization $-\widetilde{\xi}_\ell/\|\widetilde{\xi}_\ell\|$ of the section $-\widetilde{\xi}_\ell$. The closure $\overline{s_{{\widetilde{\xi}_\ell}}((M\times J)_0)}$ in $S(TM)\times J$ is a smooth manifold with boundary whose boundary in $S(TM)\times \mathrm{Int}\,J$ is the disjoint union of circle bundles over the critical loci $\sigma$ of $\widetilde{\xi}_\ell$ (including the birth-death locus $\gamma$). The fibers of the circle bundles are equators of the fibers of $S(T\sigma)$. Let $E^-_\sigma$ be the total space of the 2-disk bundle over $\sigma$ whose fibers are the lower hemispheres of the fibers of $S(T\sigma)$ which lie below the tangent spaces of the level surfaces of $\kappa$. Then $\partial\overline{s_{\widetilde{\xi}_\ell}((M\times J)_0)}=\bigcup_\sigma \partial E_\sigma^-$ as sets. Let
\[ s_{\widetilde{\xi}_\ell}^*(M\times J)=\overline{s_{\widetilde{\xi}_\ell}((M\times J)_0)}\cup \bigcup_\sigma E^-_\sigma\subset S(TM)\times J.\]
This is a 4-dimensional piecewise smooth manifold. We orient $s_{\widetilde{\xi}_\ell}^*(M\times J)$ by extending the natural orientation $(s_{\widetilde{\xi}_\ell}^{-1})^*o(M\times J)$ on $s_{\widetilde{\xi}_\ell}((M\times J)_0)$. Let 
\[ \widetilde{\theta}_\ell =E(\widetilde{\nu})^{-1}\widetilde{\pi}_{ij}^{-1}(s_{\widetilde{\xi}_\ell}^*(M\times J)), \]
where $\widetilde{\pi}_{ij}:(P\times_{SO(3)}B_{t,q})\times J\to S(TM)\times J$ denote the fiberwise Gauss map. We put
\[ \widetilde{I}_{\Gamma_0}(\widetilde{K})=\langle \widetilde{\theta}_1,\ldots,\widetilde{\theta}_k\rangle_{\Gamma_0}[\underline{\Gamma_0}]\in \calA_{n_0}(S^1;\Q), \]
where $\underline{\Gamma_0}$ is the union of $\Gamma_0$ and the Wilson loop, and $n_0=\deg\,\underline{\Gamma_0}$. If a bifurcation of type (3) occurs at $s_0$ and $J=[s_0-\ve,s_0+\ve]$ small enough, then the interior boundary of $I_\Gamma(\widetilde{K})$ for the bifurcation of type (3) is of the form $\widetilde{I}_{\Gamma_0}(\widetilde{K})\# I_{\Gamma_1}(K_{s_0-\ve})$ for $\Gamma_1$ such that $\Gamma=\underline{\Gamma_0}\#\Gamma_1$. The reason of the vanishing of the sum of the interior boundary of $I_\Gamma(\widetilde{K})$ is the same as Lemma~\ref{lem:anomaly}.
\end{proof}

Theorem~\ref{thm:invariance} now follows as a corollary of Lemmas~\ref{lem:inv_1_2}, \ref{lem:anomaly} and \ref{lem:inv_concordance}.

\mysection{Proof of Theorem~\ref{thm:injective}}{s:proof}

\subsection{Surgery formula}

\begin{Thm}\label{thm:surgery-formula}
Let $\kappa:M\to S^1$ be a smooth bundle with fiber diffeomorphic to an oriented connected closed surface. For $n\leq 2$, $Z_n$ satisfies the following properties.
\begin{enumerate}
\item $Z_n$ is a finite type invariant of $M$-null type $n$. Hence $Z_n$ induces a map $Z_n:\calK_n(M)/\calK_{n+1}(M)\to \calA_n(S^1;\widehat{\Lambda})$.
\item If $H_1(M)=\Z$, then $Z_n(\psi_n(\Gamma(\phi)))=[\Gamma(\phi)]$ in $\calA_n(S^1;\widehat{\Lambda})$ for a monomial $\Lambda$-colored Jacobi diagram $\Gamma(\phi)\in\calG_n^\null(S^1;\Lambda)$. For general $H_1(M)$, the following diagram is commutative:
\[ \xymatrix{
  \calA_n^\null(S^1;\Lambda_M) \ar[r]^-{\psi_n} \ar[d]_-{\kappa_*} & \calK_n(M)/\calK_{n+1}(M) \ar[ld]^-{Z_n}\\
  \calA_n(S^1;\widehat{\Lambda})}
\]
where $\kappa_*$ is the natural map induced by $\kappa_*:K(H_1(M),1)\to K(H_1(S^1),1)$.
\end{enumerate}
\end{Thm}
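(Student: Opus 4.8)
The plan is to prove the two assertions separately: part~(1) is the finite-type property $Z_n(\calK_{n+1}(M))=0$, and part~(2) is an explicit evaluation of $Z_n$ on the image of the surgery map. Together with Proposition~\ref{prop:psi-surjective} these yield Theorem~\ref{thm:injective}, since $Z_n\circ\psi_n$ will turn out to be $2^n$ times (a pushforward of) the identity, hence injective, which forces $\psi_n$ to be injective. Throughout I would work with the description of $Z_n$ as a signed count of AL-graphs furnished by Proposition~\ref{prop:F}, so that every claim about $Z_n$ becomes a statement about the moduli spaces $\acalM_{\Gamma(\vec{k})}(\Sigma;\xi_1,\ldots,\xi_{3n})$.

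For part~(1), I would first reduce to the generators: by Lemma~\ref{lem:prop-scheme}(1) we have $\calK_{n+1}(M)=\calK_{n+1,n+1}(M)$, so it suffices to show that $Z_n$ annihilates every forest scheme $[K;G_1,\ldots,G_{n+1}]$ in which the $G_i$ are disjoint degree-$1$ ($I$-)claspers, at most one of them $M$-null. Expanding $[K;G]=\sum_{I}(-1)^{n+1-|I|}K^{G_I}$, the value $Z_n([K;G])$ is the $(n+1)$-fold alternating difference of the AL-graph count. The key geometric observation is that a strict $I$-clasper surgery $K\rightsquigarrow K^{G_i}$ alters $K$ only inside a small ball $B_i$ and leaves the fiberwise gradients $\xi_1,\ldots,\xi_{3n}$ (hence all horizontal and vertical segments of AL-paths) untouched; consequently the count $\#\acalM_{\Gamma(\vec k)}$ depends on the presence of $G_i$ only through the graphs whose image interacts with $B_i$ via a Wilson-loop segment or an AL-path endpoint lying on $K\cap B_i$. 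Since a degree-$n$ graph $\Gamma$ carries only enough vertices to realize $n$ such local interactions — this is exactly the matching, under $\psi_n$, of a degree-$n$ diagram with a size-$n$ forest scheme in $\calK_{n,n}(M)$ — no single AL-graph can be sensitive to all $n+1$ disjoint balls $B_1,\ldots,B_{n+1}$. Hence for every $\Gamma$ and every $\vec k$ there is an index $i$ whose clasper does not affect the count, and the alternating sum over that index cancels in pairs. This is the AL-graph analogue of the finite-type arguments of \cite{GGP,Ha} and of the configuration-space-integral arguments of \cite{BT,AF}, the degenerate boundary contributions being controlled exactly as in Lemmas~\ref{lem:inv_1_2} and \ref{lem:anomaly}.

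For part~(2), write $\psi_n(\Gamma(\phi))=[K;G]$ with $G$ the size-$n$ family of degree-$1$ claspers realizing $\Gamma$ as in \S\ref{ss:degree_1}. By the finite-type property just proved, $Z_n([K;G])$ is unchanged if we discard every term of the expansion in which at least one clasper is absent, so only the ``diagonal'' term, in which all $n$ claspers are simultaneously surgered, survives. I would then identify the AL-graphs contributing to this diagonal term: near each realized clasper the moduli space $\acalM_{\Gamma(\vec k)}$ contains precisely the configuration that reconstructs the corresponding piece of $\Gamma$, and a local model computation shows that each of the $n$ degree-$1$ claspers contributes a factor $2$ (through its two disk-leaves, i.e. the two local resolutions), producing the overall factor $2^n$; the surviving graph is $\Gamma$ itself, while the remaining degree-$n$ graphs either fail to close up against the claspers or cancel under the transversality and sign analysis. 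Finally, the $\Lambda=\Q[t^{\pm1}]$-coloring of the output records, via Proposition~\ref{prop:F}, the $\kappa$-projected winding of the edges, i.e. their algebraic intersection with $\Sigma=\kappa^{-1}(0)$; this is exactly the image of $\phi$ under $\kappa_*:H_1(M)\to H_1(S^1)=\Z$, giving the commutative diagram in general and, when $H_1(M)=\Z$ so that $\kappa_*$ is the identity, the formula $Z_n(\psi_n(\Gamma(\phi)))=2^n[\Gamma(\phi)]$.

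The main obstacle I anticipate is the precise bookkeeping in part~(2): verifying that the surviving diagonal configuration is transverse and isolated, matching its coorientation sign against the AS, vertex- and edge-orientation conventions of $\calA_n(S^1;\widehat{\Lambda})$, and pinning down the multiplicity exactly as $2^n$ rather than merely as some undetermined nonzero scalar. This needs a careful local model for the AL-graph near a realized clasper together with the sign rules of \S\ref{ss:count}, and is where the hypothesis $n\le 2$ (already required in Theorem~\ref{thm:invariance} to kill the anomalous faces) enters. A secondary difficulty in part~(1) is making the heuristic ``$n$ interactions for a degree-$n$ graph'' rigorous, i.e. proving that no degree-$n$ AL-graph is simultaneously sensitive to $n+1$ disjoint clasper balls; I would establish this from the explicit vertex and edge count of $\Gamma$ together with the fact that clasper surgery leaves the $\xi_i$, and hence the AL-paths, unchanged.
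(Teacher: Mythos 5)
Your overall strategy coincides with the paper's: reduce everything to the AL-graph counts of Proposition~\ref{prop:F}, prove (1) by a cancellation/occupancy argument over the terms of the alternating sum, and prove (2) by identifying the surviving configurations and their multiplicity. But at both quantitative crux points there is a genuine gap. In part (1), the counting you explicitly defer as a ``heuristic'' is, as stated, false: a degree-$n$ diagram has $2n$ univalent vertices and $2n\geq n+1$ for all $n\geq 1$, so ``one local interaction per ball'' yields no contradiction (for $n=2$: four univalent vertices against three balls). What closes the argument in the paper is a \emph{per-strand} occupancy requirement: for a strict $I$-clasper $G_i$ the ball $R_i$ meets the knot in \emph{two} strands, and an AL-graph whose count is sensitive to the surgery on $G_i$ must carry a univalent vertex on \emph{each} strand --- if one strand is unoccupied, the crossing change can be realized by isotoping that strand alone, which induces a sign- and type-preserving bijection of the moduli spaces $\acalM_{\Gamma(\vec{k})}$ before and after surgery, so those terms cancel in pairs. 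Since at most one of the $n+1$ claspers is $M$-null (contributing one strand), occupancy forces at least $2n+1$ univalent vertices, contradicting the bound $2n$. Without the ``two vertices per strict clasper'' requirement, the vertex count you propose cannot produce a contradiction.

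In part (2), your explanation of the factor $2^n$ --- ``each clasper contributes a factor $2$ through its two disk-leaves, i.e.\ the two local resolutions'' --- is not the correct mechanism: the two resolutions of a crossing are what produce the $\pm 1$ linking-number count in the alternating difference, not a factor of $2$. In the paper the $2^n$ is purely combinatorial and comes from the normalization: $Z_n=\frac{1}{(2n)!(3n)!}\sum_\Gamma I_\Gamma$ runs over labeled diagrams with all edge orientations; a monomial chord diagram $\Gamma(\phi)$ has $2^n(2n)!(3n)!/|\Aut\Gamma(\phi)|$ labeled, edge-oriented representatives (the $2^n$ being the orientation choices of the $n$ chords, Wilson-loop edges being oriented by the Wilson loop), each contributing $|\Aut\Gamma(\phi)|\,[\Gamma(\phi)]$; the normalization then leaves exactly $2^n[\Gamma(\phi)]$. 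Two smaller corrections: you must first reduce to chord diagrams by the STU relation, since a general monomial diagram with trivalent vertices is realized by $\psi_n$ through higher-degree tree claspers rather than $n$ $I$-claspers in disjoint balls; and the hypothesis $n\leq 2$ enters only through the well-definedness and invariance of $Z_n$ (Theorem~\ref{thm:invariance}), not through the sign or multiplicity analysis of the surgery formula, whose arguments work in all degrees.
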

\begin{proof}
We shall prove the theorem only for $n=2$. The case $n=1$ is similar and easier. The following proof is analogous to the case of knots in $S^3$ given in \cite{BT, AF}.

(1) It suffices to check $Z_2(\calK_{3,3}(M))=0$. Let $[K;G]\in\calK_{3,3}(M)$, $G=\{G_1,G_2,G_3\}$, be an $M$-null forest scheme consisting of three disjoint $I$-claspers and let $R_i$ be a regular neighborhood of $G_i$ in $M$. (If $G_i$ is a strict $I$-clasper, then $R_i$ is an open ball. If $G_i$ has one nullhomologous leaf, then $R_i$ is an open solid torus.) By shrinking strict $I$-claspers by isotopy, we may assume that $R_i$'s are mutually disjoint and that $R_i$ is a small ball in $M$ if $G_i$ is a strict $I$-clasper. We show that $I_\Gamma([K;G])=\sum_{I\subset\{1,2,3\}}(-1)^{3-|I|}I_\Gamma(K^{G_I})$ vanishes for such a forest scheme $[K;G]$ and for any Jacobi diagram $\Gamma$ of degree $2$. 

Recall from Proposition~\ref{prop:F} that $I_\Gamma$ is given by counts of Z-graphs. By Lemma~\ref{lem:prop-scheme} (2), Z-graphs $X$ of degree 2 that may contribute to the alternating sum $I_\Gamma([K;G])$ should be such that for every $i$, $R_i$ is {\it occupied}, i.e., on each component of $R_i\cap K^{G_I}$ there is at least one univalent vertex of $X$. Note that if $G_i$ is a strict $I$-clasper, then $R_i\cap K^{G_i}$ consists of two components, and if $G_i$ is an $I$-clasper with a nullhomologous leaf, then $R_i\cap K^{G_i}$ consists of one strand. Hence $X$ should have at least 5 univalent vertices. But this is impossible if $X$ is of degree 2. This completes the proof of $Z_2([K;G])=0$ and of (1).

(2) It suffices to check the assertion for strict forest schemes in $\calK_{2,2}(M)$ since any $\Lambda$-colored Jacobi diagram on $S^1$ can be written as a sum of chord diagrams by the STU relation and since $\psi_n$ and $Z_n$ are linear. Let $[K;G]\in\calK_{2,2}(M)$, $G=\{G_1,G_2\}$, be a forest scheme corresponding to a chord diagram $\Gamma(\phi)$ such that $G_1$ and $G_2$ are strict $I$-claspers. Moreover, we may assume that the $I$-claspers are shrunk into small balls. By the argument of (1), the Z-graphs $X$ that may survive in the alternating sum $Z_2([K;G])$ should be such that for $i=1,2$, $R_i$ is occupied. Such a Z-graph corresponds to a chord diagram. Furthermore, the two univalent vertices in each $R_i$ should be connected by a Z-path included in $R_i$, i.e., a Z-path without horizontal segments, because if not, the small crossing change in $R_i$ does not change the value of $I_\Gamma$. 

For the monomial $\Lambda$-colored chord diagram $\Gamma(\phi)$, there are $\frac{2^2 4!\,6!}{|\Aut{\Gamma(\phi)}|}$ different ways of labelings and edge-orientations on $\Gamma(\phi)$, where $|\Aut{\Gamma(\phi)}|$ is the order of the group of automorphisms $g$ of $\Gamma(\phi)$ which preserves the homotopy class of $\Lambda$-coloring, namely, for two colorings $c_1,c_2:\Gamma\to K(H_1(M),1)$, $g:\Gamma(c_1)\to \Gamma(c_2)$ is such that $[c_1]=[c_2\circ g]$. Each term of these labeled oriented chord diagrams in the alternating sum $Z_2([K;G])$ contribute as $|\Aut\Gamma(\phi)|\,[\Gamma(\phi)]$. The terms for other graphs vanish. Hence we have
\[ Z_2([K;G])=\frac{1}{2^2 4!\,6!}\frac{2^2\,4!\,6!}{|\Aut\Gamma(\phi)|}|\Aut\Gamma(\phi)|\,[\Gamma(\phi)]=[\Gamma(\phi)]. \]
This completes the proof of (2).
\end{proof}

\begin{proof}[Proof of Theorem~\ref{thm:injective}]
Theorem~\ref{thm:injective}(1) is now immediate from Theorem~\ref{thm:surgery-formula}(2) and Lemma~\ref{lem:natural-injective}.
\end{proof}

\begin{Exa}
Let $M=S^2\times S^1$ and $K=\{p_0\}\times S^1$ ($p_0\in S^2$). Let $O$ be the unknot in a small ball in $M$. We consider the Whitehead double $\mathrm{Wh}\,(K)$ with respect to the product framing on $K$. By Theorem~\ref{thm:surgery-formula}, we see that 
\[ Z_1(\mathrm{Wh}\,(K))-Z_1(O)=[\Theta(0,1)], \]
where $\Theta(p,q)$ is the $\Lambda$-colored Jacobi diagram defined in Appendix~\ref{s:A_1} below. It is easy to check that $Z_1(O)=0$. For example, put $O$ in a level surface locus of the fiberwise Morse function $f$ for $\kappa$ so that it is disjoint from all the loci of descending/ascending manifolds of index 1. Then there are no Z-paths between two distinct points on $O$. Hence we have
\[ Z_1(\mathrm{Wh}\,(K))=[\Theta(0,1)]. \]
It follows from the result of Appendix~\ref{s:A_1} that $Z_1(\mathrm{Wh}\,(K))$ is nontrivial.
\end{Exa}

\appendix

\mysection{The structure of $\calA_1^\null(S^1;\Lambda)$}{s:A_1}

For $p,q\in \Z$, we put
\[ \Theta(p,q)=\fig{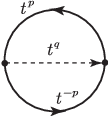},\quad \Omega(p)=\fig{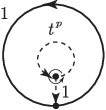}. \]

\begin{Lem}\label{lem:Omega=0}
\begin{enumerate}
\item $[\Omega(p)]=0$ in $\calA_1^\null(S^1;\Lambda)$.
\item $\calA_1^\null(S^1;\Lambda)$ is spanned by $\{[\Theta(p,q)]\}_{p,q\in\Z}$.
\end{enumerate}
\end{Lem}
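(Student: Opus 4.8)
The plan is to first enumerate the degree-$1$ Jacobi diagrams on $S^1$, then to dispose of the tadpole by a symmetry argument so that only the chords $\Theta(p,q)$ survive.

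First I would observe that a Jacobi diagram on $S^1$ of degree $1$ has exactly $2$ vertices and $3$ edges, and that there are only two connected such diagrams carrying a Wilson loop: the single chord, whose two endpoints are univalent vertices cutting the Wilson loop into two arcs (the ``theta-shaped'' diagram $\Theta$, with edges = two Wilson arcs plus the chord), and the tadpole, consisting of one univalent vertex on the Wilson loop joined by a stem to one trivalent vertex carrying a self-loop (the diagram $\Omega$, with edges = one Wilson arc plus the stem plus the self-loop). Indeed, a connected trivalent graph with $2$ vertices attached to the Wilson loop must have either both vertices univalent, forcing the chord, or one univalent and one trivalent vertex, forcing the stem-plus-self-loop; the case of two internal trivalent vertices is excluded since with no univalent vertex the internal part is disconnected from the Wilson loop. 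Consequently the spanning set $\calG_1^\null(S^1;\Lambda)$ consists precisely of the monomial colorings of these two diagrams. Using the Holonomy relation I would then normalize these colorings: at each univalent vertex Holonomy absorbs holonomy into the adjacent Wilson arcs, and the nullhomologous condition forces the two Wilson-arc colors of the chord to be mutually inverse, so every monomial null coloring of the chord is some $\Theta(p,q)$; for the tadpole the nullhomologous condition trivializes the Wilson loop, Holonomy at the trivalent vertex removes the stem holonomy while leaving the self-loop color unchanged (its two ends enter with opposite exponents $\ve_i$ and cancel), so the only surviving parameter is the self-loop holonomy $t^p$, giving $\Omega(p)$. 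Hence part (2) reduces to part (1): once $[\Omega(p)]=0$ is established, the chords $\Theta(p,q)$ span $\calA_1^\null(S^1;\Lambda)$.

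For part (1) I would exploit the orientation-reversing symmetry of the self-loop. The involution swapping the two germs of the self-loop at the trivalent vertex is an automorphism of the underlying graph; under the AS relation it reverses the vertex-orientation and contributes a sign $-1$, while under the Orientation reversal relation it inverts the self-loop color $t^p \mapsto t^{-p}$. Combined with the independent Orientation reversal of the loop edge (which identifies the two coorientations of the self-loop without an AS sign), this identifies $[\Omega(p)]$ with $-[\Omega(p)]$, forcing $[\Omega(p)]=0$. As a more robust alternative that avoids the delicate bookkeeping for a colored self-loop, I would instead apply the STU relation at the trivalent vertex, opening the self-loop into an honest chord: this writes $[\Omega(p)]$ as the difference of the two chord diagrams obtained by attaching the two ends of the loop to the Wilson loop in the two cyclic orders, and since these two ends come from a single edge the two chord diagrams coincide in $\calA_1^\null(S^1;\Lambda)$ (after applying Orientation reversal and the swap of the two Wilson arcs), so their difference vanishes.

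The step I expect to be the main obstacle is exactly this sign-and-color bookkeeping for the self-loop in part (1): one must verify that the orientation-reversing symmetry of $\Omega(p)$ genuinely acts with an odd AS sign while the colored diagram is preserved (rather than inverted without compensation), equivalently that the two STU chord-terms really are equal as colored diagrams modulo the imposed relations. Once this is settled, part (2) is immediate from the enumeration above together with $[\Omega(p)]=0$.
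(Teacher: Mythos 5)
Your working route is essentially the paper's proof: the paper establishes (1) ``immediately by the STU relation'' and (2) by observing that every degree-one diagram with a self-loop is Holonomy-equivalent to some $\Omega(p)$, so that after (1) only the chord diagrams $\Theta(p,q)$ remain. Your enumeration of the two connected degree-one diagrams, the Holonomy normalization (in particular the observation that the single Wilson edge of the tadpole is forced to be trivially colored by the nullhomologous condition, and that Holonomy cannot touch the self-loop color), and your ``fallback'' STU argument together reproduce exactly this. In the STU step, note that only the half-turn of the Wilson loop (swapping the two attachment points and the two trivially colored arcs) is needed to identify the two resulting chord diagrams: the half-turn carries the chord oriented $x_2\to x_1$ to the chord oriented $x_1\to x_2$ with the same color $t^p$, so $T\cong U$ as colored oriented diagrams and $[\Omega(p)]=[T]-[U]=0$. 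If you additionally applied Orientation reversal, as your parenthetical suggests, you would invert the color and compare $T$ with the diagram colored $t^{-p}$ instead.

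Your primary argument for (1), the germ-swap involution with an AS sign, fails for precisely the reason you flagged, and it cannot be repaired. The involution swapping the two germs of the self-loop necessarily reverses the orientation of that edge; to compare the image with $\Omega(p)$ you must then invoke the Orientation reversal relation, and that relation replaces the color $t^p$ by $t^{-p}$ --- it does not identify the two edge orientations with the color held fixed. The relation you actually obtain is therefore $[\Omega(p)]=-[\Omega(-p)]$, which kills only $[\Omega(0)]$. No other relation rescues this: Holonomy at the trivalent vertex acts trivially on the self-loop color (the two ends of the loop enter with opposite exponents $\ve_i$ and cancel), so $\Omega(p)$ and $\Omega(-p)$ are not identified by any symmetry-type move, and the vanishing of $[\Omega(p)]$ genuinely requires STU. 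So discard the symmetry argument and keep only the STU one; with that, your proof coincides with the paper's.
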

\begin{proof}
The claim (1) follows immediately by the STU relation. Since any Jacobi diagram in $\calA_1^\null(S^1;\Lambda)$ of degree 1 with a self-loop is equivalent to $\Omega(p)$ for some $p$ modulo the Holonomy relation and since $[\Omega(p)]=0$ by (1), $\calA_1^\null(S^1;\Lambda)$ is generated by chord diagrams of the form $\Theta(p,q)$. This completes the proof of (2).
\end{proof}

Consider $\Q$ as the linear subspace of $\Q[t]$ of constants. Let 
\[ W:\calA_1^\null(S^1;\Lambda)\to \Q[t]/\Q\]
be the linear map defined by
\[ W([\Theta(p,q)])=t^{p+q}\quad (\mbox{mod $\Q$}).\]
This is well-defined since $W$ respects all the relations for $\calA_1^\null(S^1;\Lambda)$ (except the STU relation, which imposes no restriction for this degree, by Lemma~\ref{lem:Omega=0}(1)).
\begin{Prop}
The map $W$ is a linear isomorphism.
\end{Prop}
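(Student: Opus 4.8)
The plan is to prove surjectivity and injectivity separately, the former being immediate and the latter following from the well-definedness of $W$ once the spanning set is put into a suitable normal form.

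Surjectivity is clear: for every integer $m\ge 0$ one has $W([\Theta(m,0)])=t^m$, so the image of $W$ contains all monomials and hence all of $\Q[t]/\Q$. For injectivity I would first trim the spanning set $\{[\Theta(p,q)]\}_{p,q\in\Z}$ produced by the preceding lemma. Using the Holonomy relation to slide the $H_1(M)$-colorings along the Wilson loop, I expect to show that $[\Theta(p,q)]$ depends only on the total exponent $p+q$, i.e.\ $[\Theta(p,q)]=[\Theta(p+q,0)]$; the Orientation reversal relation then lets one restrict to $p+q\ge 0$, and the FI relation kills the constant diagram, $[\Theta(0,0)]=0$. In degree $1$ the AS, IHX and STU relations produce only the self-loop diagrams $\Omega(p)$, which already vanish by the preceding lemma, so they impose nothing further. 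Writing $\theta_m:=[\Theta(m,0)]$, this shows that $\calA_1^\null(S^1;\Lambda)$ is spanned by $\{\theta_m\}_{m\ge 1}$.

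The conclusion is then formal. Any $x\in\Ker W$ may be written as a finite sum $x=\sum_{m\ge 1}c_m\theta_m$; applying the (well-defined) map $W$ gives $\sum_{m\ge 1}c_m t^m=0$ in $\Q[t]/\Q$, and since $\{t^m\}_{m\ge1}$ is a $\Q$-basis of $\Q[t]/\Q$ we obtain $c_m=0$ for all $m$, whence $x=0$. Thus $W$ is injective, and together with surjectivity it is an isomorphism; equivalently, the assignment $t^m\mapsto\theta_m$ furnishes a two-sided inverse.

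The main obstacle is the normal-form step, namely verifying that Holonomy, Orientation reversal and FI suffice to bring every $[\Theta(p,q)]$ into the one-parameter family $\{\theta_m\}_{m\ge1}$ with the correct sign and exponent bookkeeping, compatibly with the stated value $W([\Theta(p,q)])=t^{p+q}$ and with the target $\Q[t]/\Q$ carrying only nonnegative powers of $t$. That no relations remain among the $\theta_m$ beyond this reduction is guaranteed \emph{a posteriori} by the well-definedness of $W$, so the real care lies in the orientation and coloring conventions rather than in any further algebra.
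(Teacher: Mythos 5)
Your proof is correct and takes essentially the same route as the paper: the Holonomy/FI/orientation-reversal reduction of $[\Theta(p,q)]$ to the one-parameter family $\theta_{p+q}$, combined with the already-established well-definedness of $W$, is exactly the paper's argument, packaged there as the explicit two-sided inverse $L(t^{m})=[\Theta(0,m)]$ — the equivalence you yourself note at the end.
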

\begin{proof}
Let $L:\Q[t]/\Q\to \calA_1^\null(S^1;\Lambda)$ be the linear map defined by $L(t^p)=[\Theta(0,p)]$ for $p\geq 0$, which is well-defined. We have $L(W([\Theta(p,q)]))=L(t^{p+q})=[\Theta(0,p+q)]=[\Theta(p,q)]$ by the Holonomy relation and we have $W(L(t^{p}))=W([\Theta(0,p)])=t^p$ (mod $\Q$). This completes the proof.
\end{proof}

\begin{Lem}\label{lem:natural-injective}
The natural map $\calA_1^\null(S^1;\Lambda)\to \calA_1(S^1;\widehat{\Lambda})$ is injective.
\end{Lem}
\begin{proof}
Let $\calA_1^\null(S^1;\widehat{\Lambda})$ be the subspace of $\calA_1(S^1;\widehat{\Lambda})$ spanned by chord diagrams with one chord and with Wilson edges colored by 1. By Lemma~\ref{lem:Omega=0} (1) and its analogue for $\calA_1(S^1;\widehat{\Lambda})$, the STU relation imposes no restriction for this degree. Thus one may see that $\calA_1^\null(S^1;\widehat{\Lambda})$ is linearly isomorphic to $\Q(t)/\Q$, by a similar argument as above.
\end{proof}


\section*{\bf Acknowledgments.}
The author is supported by JSPS Grant-in-Aid for Young Scientists (B) 26800041. He is grateful to Professor Kazuo Habiro for pointing out mistakes in an earlier version of this paper and for helpful comments on clasper theory. He also would like to thank Professor Efstratia Kalfagianni for giving him information on finite type invariants of knots in 3-manifolds. 
\par\bigskip

\end{document}